\documentclass[a4paper]{amsart}
\usepackage{amsfonts,amsmath,amssymb,amscd,amstext,amsbsy,latexsym}
\vfuzz2pt 
\hfuzz2pt 
\newtheorem{thm}{Theorem}[section]
\newtheorem{cor}[thm]{Corollary}
\newtheorem{lem}[thm]{Lemma}
\newtheorem{prop}[thm]{Proposition}
\theoremstyle{definition}

\theoremstyle{remark}
\newtheorem{rem}[thm]{Remark}
\newtheorem{exm}[thm]{Example}
\numberwithin{equation}{section}


\setlength{\topmargin}{0cm}\setlength{\oddsidemargin}{0cm}\setlength{\evensidemargin}{0cm}

\setlength{\headheight}{.25cm}\setlength{\headsep}{.25cm}

\setlength{\textwidth}{16cm}\setlength{\textheight}{24.2cm}

\setlength{\parindent}{0cm}\setlength{\parskip}{0cm}

\begin{document}

\title{The conductor ideals of Maximal subrings in Non-commutative Rings}%
\author{Alborz Azarang}%
\keywords{maximal subrings, conductor ideal, prime ideal, idealizer, integrally closed, finiteness condition}%
\subjclass[2010]{16D25; 16D80; 16N60; 16P20; 16P40}%

\maketitle

\centerline{Department of Mathematics, Faculty of Mathematical Sciences and Computer,}
\centerline{ Shahid Chamran University
of Ahvaz, Ahvaz-Iran} \centerline{a${}_{-}$azarang@scu.ac.ir}
\centerline{ORCID ID: orcid.org/0000-0001-9598-2411}
\begin{abstract}
Let $R$ be a maximal subring of a ring $T$, and $(R:T)$, $(R:T)_l$ and $(R:T)_r$ denote the greatest ideal, left ideal and right ideal of $T$ which is contained in $R$, respectively. It is shown that $(R:T)_l$ and $(R:T)_r$ are prime ideals of $R$ and $|Min_R((R:T))|\leq 2$. We prove that if $T_R$ has a maximal submodule, then $(R:T)_l$ is a right primitive ideal of $R$. We investigate that when $(R:T)_r$ is a completely prime (right) ideal of $R$ or $T$. We see that $Char(R/(R:T)_l)=Char(R/(R:T)_r)$, and if $Char(T)$ is neither zero or a prime number, then $(R:T)\neq 0$. If $|Min(R)|\geq 3$, then $(R:T)$ and $(R:T)_l(R:T)_r$ are nonzero ideals. If $R$ is integrally closed in $T$, then $(R:T)_l$ and $(R:T)_r$ are prime one-sided ideals of $T$, moreover $(R:T)$ is a semiprime ideal of $T$ and either $(R:T)$ is a prime ideal of $T$ or $(R:T)=(R:T)_l\cap (R:T)_r$ is a semiprime ideal of $R$. We observe that if $(R:T)_lT=T$, then $T$ is a finitely generated left $R$-module and $(R:T)_l$ is a finitely generated right $R$-module which is a right primitive ideal of $R$. Finally we study the Noetherian and the Artinian properties between $R$ and $T$.
\end{abstract}
\section{Introduction}
\subsection{Motivation}
In \cite{frd}, Ferrand and Oliver studied minimal ring extension of commutative rings. Note that whenever $R\subseteq T$ is a minimal extension of commutative ring (i.e., $R$ is a maximal subring of $T$), then the integral closure of $R$ in $T$, say $S$, is a subring between $R$ and $T$ and therefore by the minimality of the extension we infer that either $S=R$ (i.e., $R$ is integrally closed in $T$) or $S=T$ (i.e., $T$ is integral over $R$, equivalently $T$ is a finitely generated $R$-module). They proved that $T$ is integral over $R$ if and only if $(R:T)\in Max(R)$, see \cite[Proposition 4.1]{frd}. Moreover, if $R$ is integrally closed in $T$, then $(R:T)$ is a prime ideal of $T$, see \cite[Lemma 3.2]{frd}. Hence in any cases we deduce that $(R:T)$ is a prime ideal of $R$.\\

In this paper, motivated by the previous results, we are interested to study the conductor (left/right) ideals of minimal ring extension in non-commutative rings. More exactly, if $R\subseteq T$ is a minimal ring extension of rings, i.e., $R$ is a maximal subring of $T$ (where $T$ is an arbitrary ring which is not necessary commutative), then we would like to study the properties of $(R:T)$, the largest ideal of $T$ which is contained in $R$, $(R:T)_l$, the largest left ideal of $T$ which is contained in $R$, and $(R:T)_r$, the largest right ideal of $T$ which is contained in $R$. As we see in this paper these conductors are ideals of $R$ (and therefore we use "maximal subrings" instead of "minimal ring extension" in this paper). Moreover, in fact we proved that $(R:T)_l$ and $(R:T)_r$ are prime ideals of $R$.\\

It is interesting to know that each ring $R$ can be considered as a maximal subring of a larger ring $T$, see \cite[Theorem 3.7]{azq}. Note that if $T$ is a ring and $A$ be a one-sided ideal of $T$, then the idealizer of $A$ in $T$ is the largest subring of $T$, which $A$ is a two-sided ideal of it. More exactly, if $A$ is a right ideal of $T$, then the idealizer of $A$ is the subring $\mathbb{I}_T(A):=\{r\in T\ |\ rA\subseteq A\}$. By this definition we have the following from \cite{azq}:

\begin{thm}\label{pt1}
\begin{enumerate}
\item \cite[Theorem 4.1]{azq}. Let $T$ be a ring and $A$ be a maximal right/left ideal of $T$ which is not a two-sided ideal of $T$. Then the idealizer of $A$ is a maximal subring of $T$. In particular, either a ring has a maximal subring or is a quasi duo ring (i.e., each maximal left/right ideal is two-sided).
\item \cite[Proposition 4.2]{azq}. Let $R$ be a maximal subring of a ring $T$ which contains a maximal one-sided ideal $A$ of $T$ which is not a two-sided ideal of $T$. Then $R$ is the idealizer of $A$ in $T$.
\end{enumerate}
\end{thm}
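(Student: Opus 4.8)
The plan is to derive all three parts from a single idealizer computation; I write out the case where $A$ is a right ideal, the left case being symmetric. Set $R_0 := \mathbb{I}_T(A) = \{r \in T \mid rA \subseteq A\}$, which is readily checked to be a subring of $T$ containing $A$ (indeed $aA \subseteq AT \subseteq A$ for every $a \in A$, since $A$ is a right ideal). The computation I will reuse throughout is this: if $s \in T$ satisfies $sA \not\subseteq A$, then $sA$ is itself a right ideal of $T$ (because $(sA)T = s(AT) \subseteq sA$), so $sA + A$ is a right ideal strictly containing $A$, whence $sA + A = T$ by the maximality of $A$. For part (1), since $A$ is not two-sided there is some $t$ with $tA \not\subseteq A$, giving $R_0 \neq T$. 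To see $R_0$ is maximal, take any subring $S$ with $R_0 \subsetneq S \subseteq T$ and pick $s \in S \setminus R_0$, so $sA \not\subseteq A$ and hence $sA + A = T$; but $s \in S$ and $A \subseteq R_0 \subseteq S$ force $sA + A \subseteq S$, so $S = T$. The ``in particular'' clause then follows because every nonzero ring has maximal one-sided ideals by Zorn's lemma: a ring that is not quasi-duo possesses a maximal one-sided ideal that is not two-sided, and the construction above converts it into a maximal subring.

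Part (2) is the same computation read contrapositively. It suffices to prove $R \subseteq R_0$: since $A$ is not two-sided we have $R_0 \neq T$, and $R_0$ is a maximal subring by part (1), so an inclusion $R \subseteq R_0 \subsetneq T$ together with the maximality of $R$ forces $R = R_0$. If instead some $r \in R$ had $rA \not\subseteq A$, the displayed computation would give $rA + A = T$; but $r \in R$ and $A \subseteq R$ yield $rA + A \subseteq R$, contradicting $R \neq T$. Hence $RA \subseteq A$, that is, $R \subseteq R_0$, and equality follows.

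For part (3), by part (1) it is enough to show that such a $T$ is not quasi-duo. Suppose it were. Every nonzero ring has a maximal left ideal $M$, which by hypothesis is two-sided. If $T$ is simple, its only two-sided ideals are $0$ and $T$, so $M = 0$. If $T$ is left primitive, choose $M$ to be the annihilator of a generator of a faithful simple left module, so that the largest two-sided ideal contained in $M$ is $0$; a two-sided $M$ then coincides with its own core and again $M = 0$. In either case $0$ is a maximal left ideal, so $T$ has no nonzero proper left ideal and is a division ring, contrary to hypothesis. The right primitive case is symmetric, using a maximal right ideal. Thus $T$ is not quasi-duo and, by part (1), has a maximal subring.

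The only point requiring care --- the \emph{main obstacle}, though a mild one --- is checking that $sA$ is genuinely a right ideal, so that the maximality of $A$ legitimately applies to $sA + A$. Once this is secured, the inclusions $A \subseteq S$ and $A \subseteq R$ collapse each of the three arguments to essentially one line.
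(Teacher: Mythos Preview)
Your proof is correct. Note that the paper does not actually prove this theorem: it is quoted from \cite{azq} as background (each item carries an explicit citation), so there is no proof in the paper to compare against. That said, your argument is the standard one and matches what one finds in the cited source: the key observation that $sA+A$ is a right ideal strictly containing $A$ whenever $s\notin\mathbb{I}_T(A)$ drives all three parts, exactly as you set it up. One tiny remark on part~(2): you do not actually need to invoke part~(1) to know $R_0$ is maximal---it suffices that $R_0$ is a proper subring and $R$ is maximal with $R\subseteq R_0$---but the argument is fine as written.
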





In \cite{blgfc,blknfms,klein,laffey,lee}, the authors proved that if $R$ is a finite maximal subring of a ring $T$, then $T$ is finite too. In \cite[Theorem 3.8]{azkrc}, it is shown that if $R$ is a maximal subring of a commutative ring $T$, then $R$ is Artinian if and only if $T$ is Artinian and is integral over $R$, which immediately implies the latter facts for commutative rings. It is clear that, if $R$ is a maximal subring of a commutative ring $T$, then $T$ is Noetherian whenever $R$ is Noetherian, for $T=R[\alpha]$, for each $\alpha\in T\setminus R$ and use the Hilbert Basis Theorem. Motivated by these results in commutative case, we are interested to investigate the Artinian and the Noetherian properties in minimal ring extension of non-commutative rings too.\\

Finally, we refer the reader to \cite{dbsid,dbsc,glf,ghi,adjex,abmin}, for minimal ring extension of commutative rings and \cite{dorsy} for non-commutative case. Also, we refer the reader to \cite{modica,azkra,azkrc,azkrm} for maximal subrings in commutative rings and \cite{azq} for maximal subrings of non-commutative rings.

\subsection{Review of the results}
Assume that $R$ is a maximal subring of a ring $T$. In Section 2, we obtain some basic fact about the conductor ideals of $R$.  We prove that $(R:T)_l$ and $(R:T)_r$ are prime ideals of $R$. Moreover $Min_R((R:T))\subseteq \{(R:T)_l,(R:T)_r\}$ and in fact $|Min_R((R:T))|=1$ if and only if $(R:T)_l$ and $(R:T)_r$ are comparable if and only if $(R:T)$ is prime ideal of $R$. We show that if $T/R$ as right $R$-module has a maximal submodule, then $(R:T)_l$ is a right primitive ideal of $R$. Conversely, we observe that if either $(R:T)_l\in Max_r(R)$ or $R$ is a right Artinian ring, then $T/R$ has a maximal right $R$-submodule. We investigate whether $(R:T)_r$ is a completely prime (right) ideal of $T$ or of $R$. In particular, if $(R:T)_r$ is a completely prime right ideal of $T$, then $(R:T)_r$ is a completely prime ideal of $R$ (that is $R/(R:T)_r$ is a domain) and $T/(R:T)_r$ is a torsionfree left $R/(R:T)_r$-module; conversely if $T/(R:T)_r$ is a torsionfree left $R/(R:T)_l$-module, then either $(R:T)_r$ is an ideal of $T$ or $(R:T)_r$ is a completely prime right ideal of $T$ and $R$ is the idealizer of $(R:T)_r$ in $T$. Moreover, if $R$ is a right Artinian ring and $(R:T)_r$ is a completely prime right ideal of $T$, then either $R/P$ or $T/P$ is a division ring. We prove that the ring $End((T/R)_R)$ and therefore the ring $End_{\mathbb{Z}}(T/R)$ contain a copy of $R/(R:T)_r$. Consequently, the ring $End_{\mathbb{Z}}(T/R)$ contains a copies of $R/(R:T)_r$ and $R/(R:T)_l$. In particular, the characteristic of the ring $End_{\mathbb{Z}}(T/R)$ is either zero or a prime number and thus $Char(R/(R:T)_l)=Char(R/(R:T)_r)$. We show that if $Char(T)$ is not a prime number, then either there exists a prime number $p$ such that $pT\subseteq (R:T)$ (and therefore $(R:T)\neq 0$) or $Char(T)=Char(End_{\mathbb{Z}}(T/R))=0$. In Section 3, we show that if $R$ is ($2$-)integrally closed in $T$, then $(R:T)_l$ and $(R:T)_r$ are prime one-sided ideals of $T$. In particular, in this case $(R:T)$ is a semiprime ideal of $T$ and either $(R:T)$ is a prime ideal of $T$ or a semiprime ideal of $R$. Finally in Section 4, we consider some finiteness conditions on the extension $R\subseteq T$. We show that if $(R:T)_lT=T$, then $T$ is a finitely generated left $R$-module, $(R:T)_l$ is a finitely generated right ideal of $R$ and $R$ is the idealizer of $(R:T)_l$. In particular, if $(R:T)\in Max(T)\setminus Spec(R)$, then ${}_R T$, $T_R$, $(R:T)_l$ as right ideal of $R$ and $(R:T)_r$ as left ideal of $R$, are all finitely generated. Consequently, if $(R:T)\in Max(T)$ and $R$ is left Noetherian (resp. Artinian), then either $(R:T)=(R:T)_l$ or $T$ is a right Noetherian (resp. Artinian) ring. We show that if $R$ is a Noetherian (resp. an Artinian) ring and $(R:T)\neq 0$, then $T/l.ann_T((R:T)_l)$ and $T/r.ann_T((R:T)_r)$ are left and right Noetherian (resp. Artinian) rings, respectively. Moreover, if in addition $T$ is semiprime, then $T/ann_T((R:T))$ is a Noetherian (resp. an Artinian) ring. In particular, if $T$ is a prime ring, then $T$ is finitely generated as left and right $R$-modules and consequently $T$ is a Noetherian (resp. an Artinian) ring. Finally, if $R$ is a Noetherian ring, $T$ is not a prime ring and $R$ contains a prime ideal $Q$ of $T$, then either $T$ is Noetherian or $Q=(R:T)$ (therefore $Q$ is unique), $Q$ is a minimal prime ideal of $T$ and either $Q=(R:T)_l$ or $Q=(R:T)_r$.

\subsection{Notations and Definitions}
All rings in this paper are unital with $1\neq 0$ and subrings, modules and homomorphisms are also unital. If $R\subsetneq T$ is a ring extension and there exists no other subring between $R$ and $T$, then $R$ is called a maximal subring of $T$, or the extension $R\subseteq T$ is called a minimal ring extension. If $T$ is a ring, $R$ and $S$ are subrings of $T$, then clearly $T$ is a $(R,S)$-bimodule and therefore we can consider $(R,S)$-subbimodules of $T$. In particular, if $t\in T$, then the $(R,S)$-subbimodule of $T$ which is generated by $t$ is denoted by $RtS=\{\sum_{i=1}^n r_its_i\ |\ r_i\in R, s_i\in S, n\geq 0\}$. It is clear that if $R\subseteq S$ and $t\in S$, then $R+RtS$ and $R+StR$ are also subrings of $T$ which contain $R$. In particular, for each $t\in T$, the subrings $R+RtT$ and $R+TtR$ of $T$ contain $R$. More generally, if $I$ is left (resp. right) ideal of $T$, then $R+IR$ (resp. $R+RI$) is a subring of $T$ which contains $R$. If $T$ is a ring, $I$ is an ideal of $T$ and $M$ is a left (resp. right) $T$-module, then $Min_T(I)$, $Max_r(T)$, $Max_l(T)$, $Max(T)$, $l.ann_T(M)$ (resp. $r.ann_T(M)$) and $ann_T(I)$ denote the set of all minimal prime ideals of $I$ in $T$, the set of all maximal left ideals of $T$, the set of all maximal right ideals of $T$, the set of all maximal ideals of $T$, the left annihilator of $M$ in $T$ (resp. the right annihilator of $M$ in $T$) and the annihilator of $I$ in $T$ (i.e., $ann_T(I)=l.ann_T(I)\cap r.ann_T(I)$), respectively. We use $Min(T)$ for $Min_T(0)$. $J(T)$ denotes the Jacobson radical of a ring $T$ and for an ideal $I$ of $T$, we denote the prime radical of $I$ by $rad_T(I)$. The characteristic of a ring $T$ is denoted by $Char(T)$. If $X$ is a subset of a ring $T$, then $C_T(X)$ is the centralizer of $X$ in $T$, in particular, $C(T)=C_T(T)$ is the center of $T$. If $T$ is a ring, then $\mathbb{M}_n(T)$ denote the ring of all $n\times n$ square matrices over $T$. If $M$ is a left (resp. right) module over a ring $T$, then the ring of all $T$-module homomorphisms of $M$ is denoted by $End({}_TM)$ (resp. $End(M_T)$); when $T=\mathbb{Z}$ (i.e., $M$ is an abelian group), then we use $End_{\mathbb{Z}}(M)$.  A ring $T$ is called left (resp. right) quasi duo if each maximal left (resp. right) ideal of $T$ is a two-sided ideal of $T$, see \cite{lamq}. $T$ is called quasi duo if $T$ is left and right quasi duo ring. A ring $T$ is called left (resp. right) duo if each left (resp. right) ideal of $T$ is two-sided. Similarly duo rings are defined. If $R$ is a subring of a ring $T$ and $t\in T$, then we say that $t$ is left (resp. right) $n$-integral over $R$, if $t$ is a root of a left (resp. right) monic polynomial of degree $n$ over $R$ ($n\geq 1$). $R$ is called left (resp. right) $n$-integrally closed in $T$, if every left (resp. right) $n$-integral element of $T$ over $R$ belongs to $R$. $R$ is called $n$-integrally closed in $T$ whenever $R$ is left and right $n$-integrally closed in $T$. $R$ is called left (resp. right) integrally closed in $T$, if $R$ is left (resp. right) $n$-integrally closed in $T$, for each $n$. $R$ is integrally closed in $T$, if $R$ is left and right integrally closed in $T$. For other notations and definitions we refer the reader to \cite{good,lam,lam2,macrob,rvn}.

\section{The Conductor ideals of Maximal Subrings}
Let $R\subseteq T$ be a ring extension, then we have three types of conductor ideals for $R$: $(R:T):=\{x\in T\ |\ TxT\subseteq R\}$, $(R:T)_l:=\{x\in T\ |\ Tx\subseteq R\}$, $(R:T)_r:=\{x\in T\ |\ xT\subseteq R\}$. In other words, $(R:T)$ is the largest common ideal between $R$ and $T$, $(R:T)_l$ (resp. $(R:T)_r$)  is the largest common left (resp. right) ideal between $R$ and $T$. It is clear that $(R:T)_l=r.ann_R(\frac{T}{R})$ and therefore $(R:T)_l$ is a two sided ideal of $R$. Similarly, $(R:T)_r=l.ann_R(\frac{T}{R})$ is an ideal of $R$. Finally note that $(R:T)_l(R:T)_r\subseteq (R:T)\subseteq (R:T)_l\cap (R:T)_r$. In particular, if $R\neq T$, then $(R:T)_l+(R:T)_r\neq T$. Now we want to prove some generalization of the results in \cite{frd}. Let us first review some facts from \cite{frd}. If $T$ is a commutative ring and $R$ is a maximal subring of $R$, then $(R:T)$ is a prime ideal of $R$. In fact, since $R'$, the integral closure of $R$ in $T$, is a ring between $R$ and $T$, then by maximality of $R$ either $R'=T$, i.e., $T$ is integral over $R$ (equivalently $T$ is a finitely generated $R$-module) or $R'=R$, i.e., $R$ is integrally closed in $T$. Moreover, $T$ is integral over $R$ if and only if $(R:T)\in Max(R)$ (and therefore is a prime ideal of $R$); and if $R$ is integrally closed in $T$, then $(R:T)$ is a prime ideal of $T$ and therefore is a prime ideal of $R$ (moreover for each $x, y\in T$, if $xy\in R$, then $x\in R$ or $y\in R$). Hence in any cases, $(R:T)$ is a prime ideal of $R$. Now the following is in order.

\begin{lem}\label{t1}
Let $R$ be a maximal subring of a ring $T$. Then $(R:T)_l$ and $(R:T)_r$ are prime ideals of $R$.
\end{lem}
\begin{proof}
Let $a,b\in R$ and $aRb\subseteq (R:T)_l$. Thus $TaRb\subseteq R$. Now assume that $a\notin (R:T)_l$, i.e., $Ta\nsubseteq R$. Thus $TaR\nsubseteq R$. Since $R$ is a maximal subring of $T$ and $R+TaR$ is a subring of $T$ which properly contains $R$, we conclude that $R+TaR=T$. Thus $Tb=Rb+TaRb\subseteq R$, i.e., $b\in (R:T)_l$. Hence $(R:T)_l$ is a prime ideal of $R$. Similarly $(R:T)_r$ is a prime ideal of $R$.
\end{proof}

By the previous result we have some observation for conductor ideals of maximal subrings as follows.

\begin{prop}\label{t2}
Let $R$ be a maximal subring of a ring $T$. Then the following hold:
\begin{enumerate}
\item $rad_R((R:T))=(R:T)_l\cap (R:T)_r$.
\item $(R:T)$ is a prime ideal of $R$ if and only if either $(R:T)=(R:T)_l$ or $(R:T)=(R:T)_r$ if and only if $(R:T)_l$ and $(R:T)_r$ are comparable.
\item $|Min_R((R:T))|\leq 2$. In fact, $|Min_R((R:T))|=1$ if and only if $(R:T)_l$ and $(R:T)_r$ are comparable. Otherwise $Min_R((R:T))=\{(R:T)_l, (R:T)_r\}$ and either $(R:T)$ is a prime ideal of $T$ or there exist $Q_l$ and $Q_r$ in $Min_T((R:T))$ such that $Q_l\cap R=(R:T)_l$, $Q_r\cap R=(R:T)_r$, $R/(R:T)_l\cong T/Q_l$ and $R/(R:T)_r\cong T/Q_r$; 
\item $(R:T)$ is a semiprime ideal of $R$ if and only if $(R:T)=(R:T)_l\cap (R:T)_r$.
\item If $R$ is a zero dimensional ring, then $(R:T)_l$ and $(R:T)_r$ are maximal ideals of $R$. Hence either $(R:T)=(R:T)_l=(R:T)_r$ or $(R:T)_l+(R:T)_r=R$.
\item $(R:T)=r.ann_T(T/(R:T)_r)=l.ann_T(T/(R:T)_l)=l.ann_R(T/(R:T)_l)=r.ann_R(T/(R:T)_r)$. In particular, if $(R:T)_l\in Max_l(T)$ (resp. $(R:T)_r\in Max_r(T)$), then $(R:T)$ is a left (resp. right) primitive ideal of $T$.
\item If $(R:T)\in Max(R)$, then $(R:T)=(R:T)_l=(R:T)_r$.
\item If $J(R)$ is nilpotent (in particular, if $R$ is a one-sided Artinian ring), then $J(R)\subseteq (R:T)_l\cap (R:T)_r$ and $J(R)^2\subseteq (R:T)$.
\item If $Nil^*(R)$ is nilpotent (in particular, if $R$ is a one-sided Noetherian ring), then $Nil^*(R)\subseteq (R:T)_l\cap (R:T)_r$ and $(Nil^*(R))^2\subseteq (R:T)$.
\end{enumerate}
\end{prop}
\begin{proof}
Since $((R:T)_l\cap(R:T)_r)^2\subseteq (R:T)_l(R:T)_r\subseteq (R:T)\subseteq (R:T)_l\cap (R:T)_r$ and $(R:T)_l$, $(R:T)_r$ are prime ideals of $R$, we conclude that $rad_R((R:T))=(R:T)_l\cap (R:T)_r$. Hence $(1)$ holds. The first pard of $(2)$ is evident by $(1)$. It is clear that if either $(R:T)=(R:T)_l$ or $(R:T)=(R:T)_r$, then $(R:T)_l$ and $(R:T)_r$ are comparable. Conversely, suppose that $(R:T)_l$ and $(R:T)_r$ are comparable. For instance, let $(R:T)_l\subseteq (R:T)_r$. Let $x\in (R:T)_l$, therefore $Tx\subseteq (R:T)_l\subseteq (R:T)_r$. Thus $TxT\subseteq R$. Hence $x\in (R:T)$. Thus $(R:T)=(R:T)_l$. For $(3)$, by the proof of $(1)$ note that for each prime ideal $Q$ of $R$ which contains $(R:T)$, we have $(R:T)_l\subseteq Q$ or $(R:T)_r\subseteq Q$ and therefore $Min_R((R:T))\subseteq \{(R:T)_l, (R:T)_r\}$. Hence by $(1)$ we immediately conclude that $|Min_R((R:T))|=1$ if and only if $(R:T)_l$ and $(R:T)_r$ are comparable. For the next part of $(3)$, assume that $(R:T)_l$ and $(R:T)_r$ are incomparable and therefore $Min_R((R:T))=\{(R:T)_l, (R:T)_r\}$. Now note that if $A\subseteq B$ is a minimal ring extension with $(A:B)=0$, and $P$ is a minimal prime ideal of $A$, then either $B$ is a prime ring or $P$ is a contraction of a minimal prime ideal of $B$. To see this, note that $A\setminus P$, is a $m$-system in $A$ and therefore in $B$, thus there exists a prime ideal $Q$ of $B$ such that $Q\cap A\subseteq P$. If $Q=0$, then $B$ is prime, otherwise $A+Q=B$, for $A$ is a maximal subring of $B$ and $(A:B)=0$. Thus $A/(A\cap Q)\cong B/Q$ as rings. Hence $A\cap Q$ is a prime ideal of $A$ and therefore $Q\cap A=P$, for $P$ is minimal. Clearly we may assume that $Q$ is a minimal prime of $B$. Applying this fact to the minimal ring extension $R/(R:T)\subseteq T/(R:T)$, we deduce that either $(R:T)\in Spec(T)$ or there exist minimal prime ideals $Q_l$ and $Q_r$ of $Min_T((R:T))$ such that $Q_l\cap R=(R:T)_l$ and $Q_r\cap R=(R:T)_r$. If $Q_l\subseteq R$, then $(R:T)_l\subseteq Q_l\subseteq (R:T)\subseteq (R:T)_r$ which is absurd. Thus $Q_l$, and similarly $Q_r$, are not contained in $R$. Thus by maximality of $R$ we conclude that $R+Q_l=T=R+Q_r$ and therefore we have the ring isomorphisms $R/(R:T)_l\cong T/Q_l$ and $R/(R:T)_r\cong T/Q_r$. $(4)$ is evident by $(1)$. $(5)$ is clear. For $(6)$, let $x\in T$ (or $x\in R$), then $x\in (R:T)$ if and only if $TxT\subseteq R$ if and only if $xT\subseteq (R:T)_l$ if and only if $x\in l.ann_T(T/(R:T)_l)$ (or $x\in l.ann_R(T/(R:T)_l)$), hence $(R:T)=l.ann_T(T/(R:T)_l)=l.ann_R(T/(R:T)_l)$. The proof of the other equalities of $(6)$ are similar. The final part of $(6)$ is obvious, for whenever $(R:T)_l\in Max_l(T)$, then $T/(R:T)_l$ is a simple left $T$-module and therefore $l.ann_T(T/(R:T)_l)=(R:T)$ is a left primitive ideal of $T$ and hence a prime ideal of $T$. $(7)$ is clear, for $(R:T)_l$ and $(R:T)_r$ are proper ideals of $R$ which contains $(R:T)$. Finally for $(8)$ and $(9)$, let $I$ be a nilpotent one-sided ideal of $R$, then clearly $I\subseteq (R:T)_l\cap (R:T)_r$, for $(R:T)_l$ and $(R:T)_r$ are prime ideals of $R$. Consequently, $I^2\subseteq (R:T)_l(R:T)_r\subseteq (R:T)$.
\end{proof}

\begin{exm}
Let $T$ be a ring and $M$ be a maximal left ideal of $T$ which is not an ideal of $T$. If $R$ is the idealizer of $M$ in $T$, then by $(1)$ of Theorem \ref{pt1}, $R$ is a maximal subring of $T$ and clearly $(R:T)_l=M$. Moreover the following hold:
\begin{enumerate}
\item $T/R$ is a simple left $R$-module.
\item $R\subseteq T$ is a left integral extension.
\item $R/M$ is a division ring.
\item $(R:T)$ is a left primitive ideal of $T$.
\item $(R:T)_r$ is a left primitive ideal of $R$.
\end{enumerate}
For $(1)$ note that since $M$ is a maximal left ideal of $T$ which is not an ideal of $T$, for each $x\in T\setminus R$ we infer that $M+Mx=T$ and therefore $R+Rx=T$. This immediately shows that $T/R$ is a simple left $R$-module. Hence by $(R:T)_r=l.ann_R(T/R)$ we deduce $(5)$. Also note that $x^2\in T=R+Rx$ shows that $R\subseteq T$ is a left integral extension, hence $(2)$ holds. $(3)$ is obvious for $T/M$ is a simple left $T$-module and we have the rings isomorphism $R/M\cong End({}_T(T/M))$, by \cite[Lemma 1.3]{rob}. Finally, for $(4)$ note that by $(6)$ of Proposition \ref{t2}, $(R:T)=l.ann_T(T/M)$ and therefore $(R:T)$ is a left primitive ideal of $T$.
\end{exm}

It is not hard to see that in the previous example we also have $(R:T)\subsetneq (R:T)_l$ and therefore $(R:T)_l\nsubseteq (R:T)_r$. In the following example we give an example of a maximal subring $R$ of a ring $T$ with $(R:T)_l\neq (R:T)_r$ and $(R:T)\neq (R:T)_l\cap (R:T)_r$.

\begin{exm}\label{t3}
Let $D$ be a division ring and $T=\mathbb{M}_2(D)$. It is not hard to see that $R=\begin{pmatrix}
  D & 0 \\
  D & D
\end{pmatrix}$ is a maximal subring of $T$. It is clear that $(R:T)_l=\begin{pmatrix}
  D & 0 \\
  D & 0
\end{pmatrix}$, $(R:T)_r=\begin{pmatrix}
  0 & 0 \\
  D & D
\end{pmatrix}$, hence $(R:T)_l\cap(R:T)_r=\begin{pmatrix}
  0 & 0 \\
  D & 0
\end{pmatrix}\neq (R:T)=0$. Also note that $(R:T)=0$ is a prime (in fact maximal) ideal of $T$, but is not a prime ideal in $R$.
\end{exm}

Now we have the following result which is a generalization of \cite[Proposition 4.1]{frd}.

\begin{cor}\label{t5}
Let $R$ be a maximal subring of a ring $T$. If $T/R$ as right $R$-module has a maximal submodule, then $(R:T)_l$ is a right primitive ideal of $R$. In particular, if $T/R$ (i.e. $T_R$) is a finitely generated right $R$-module, then $(R:T)_l$ is a right primitive ideal of $R$.
\end{cor}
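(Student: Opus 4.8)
The plan is to reduce the corollary to the preceding Lemma~\ref{t4} together with the definition of a right primitive ideal. Recall that an ideal $P$ of a ring $R$ is right primitive precisely when $P=r.ann_R(S)$ for some simple right $R$-module $S$. So my first step is to produce such a simple module whose annihilator is exactly $(R:T)_l$.

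Assuming $T/R$ has a maximal right $R$-submodule, I would pull this back along the quotient map $T\to T/R$ to obtain a right $R$-submodule $M$ of $T$ with $R\subseteq M\subsetneq T$ and $T/M$ simple (the maximality of $M/R$ in $T/R$ transfers to maximality of $M$ among proper submodules containing $R$, hence $T/M$ is a simple right $R$-module). Now $M$ is a proper right $R$-submodule of $T$ containing $R$, so Lemma~\ref{t4} applies directly and gives $r.ann_R(T/M)=(R:T)_l$. Since $T/M$ is simple, its annihilator $(R:T)_l$ is by definition a right primitive ideal of $R$, which is exactly the first assertion.

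For the ``in particular'' clause, suppose $T_R$ is finitely generated as a right $R$-module. Because $R\subseteq T$, the quotient $T/R$ is then also finitely generated (as a homomorphic image of $T_R$). A nonzero finitely generated module over any ring has a maximal submodule (by Zorn's lemma applied to the proper submodules, using that the union of a chain of proper submodules omitting a fixed generator is again proper; this is where unitality of the module is used). Here $T/R\neq 0$ since $R\neq T$. Hence $T/R$ has a maximal submodule and the general statement applies to conclude that $(R:T)_l$ is a right primitive ideal of $R$.

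I do not expect a genuine obstacle here, since the substance is carried entirely by Lemma~\ref{t4}; the only point requiring a little care is the correspondence-theorem argument that a maximal submodule of $T/R$ corresponds to a proper right $R$-submodule $M$ with $R\subseteq M\subsetneq T$ for which $T/M$ is simple, so that the hypotheses of Lemma~\ref{t4} are met and the identification of the annihilator with $(R:T)_l$ is legitimate. The finite-generation reduction is the standard Zorn's lemma fact and needs only the observation that $T/R$ is finitely generated and nonzero.
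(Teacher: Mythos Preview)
Your proposal is correct and matches the paper's approach: the paper presents this corollary as an ``immediate fact'' following Lemma~\ref{t4} with no further argument, and your write-up simply fills in the routine details (the correspondence-theorem identification of a maximal submodule of $T/R$ with a maximal $M$ satisfying $R\subseteq M\subsetneq T$, and the standard Zorn argument for the finitely generated case).
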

\begin{proof}
First we show that, if $M$ is a proper right $R$-submodule of $T$ which contains $R$, then $r.ann_R(T/M)=(R:T)_l$. Since $R\subseteq M$, we immediately conclude that $(R:T)_l\subseteq r.ann_R(T/M)$. Now assume that $x\in r.ann_R(T/M)$, but $x\notin (R:T)_l$. Thus $Tx\nsubseteq R$ and therefore $R+TxR=R$, for $R$ is a maximal subring of $T$. Since $Tx\subseteq M$ and $M$ is a right $R$-submodule of $T$, we deduce that $TxR\subseteq M$. Thus $T=R+TxR\subseteq M$, which is absurd. Thus $r.ann_R(T/M)\subseteq (R:T)_l$ and therefore the equality holds. Now by the assumption, let $M/R$ be a maximal right $R$-submodule of $T/R$, then $M$ is a maximal right $R$-submodule of $T$ and therefore $T/M$ is a simple right $R$-module. Therefore $r.ann_R(T/M)$ is a right primitive ideal of $R$. Hence by the first part of the proof we are done. The final part is evident.
\end{proof}

\begin{rem}\label{t6}
If $T$ is a commutative ring and $R$ is a maximal subring of $T$, then the following are equivalent:
\begin{enumerate}
\item $T$ is integral over $R$ (i.e., $T$ is a finitely generated $R$-module).
\item $(R:T)\in Max(R)$.
\item $T$ has a maximal $R$-submodule which contains $R$.
\item $T/R$ is a semisimple $R/(R:T)$-module.
\end{enumerate}
To see this, note that $(1)$ and $(2)$ are equivalent by \cite[Proposition 4.1]{frd}. Clearly, $(1)$ implies $(3)$, conversely $(3)$ implies $(2)$ by the previous corollary and the fact that in commutative ring maximal ideals and primitive ideals are coincided. Now assume that $(2)$ holds. Thus $T/R$ is a nonzero $R/(R:T)$-module. Since $R/(R:T)$ is a field, we immediately conclude that $T/R$ is a semisimple $R/(R:T)$-submodule and therefore $(4)$ holds. Finally, suppose that $T/R$ is a semisimple $R/(R:T)$-module, then $T/R$ has a maximal $R$-submodule and hence $(3)$ holds.
\end{rem}

In the next two results we see that the converse of Corollary \ref{t5}, holds (similar to the previous remark) in certain conditions.

\begin{cor}\label{t7}
Let $R$ be a maximal subring of a ring $T$. If $(R:T)_l\in Max_r(R)$, then $T$ has a maximal right $R$-submodule which contains $R$.
\end{cor}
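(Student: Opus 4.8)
The plan is to extract from the hypothesis that the quotient ring $R/(R:T)_l$ is a division ring, and then to exploit the vector-space structure this induces on $T/R$ in order to manufacture the desired maximal submodule.

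First I would record two facts about $(R:T)_l$ that are already available: it is a two-sided ideal of $R$ (this is noted just before Theorem \ref{t1}, since $(R:T)_l=r.ann_R((T/R)_R)$), and by assumption it is a maximal \emph{right} ideal of $R$. Combining these, the right ideals of the quotient ring $R/(R:T)_l$ correspond to the right ideals of $R$ containing $(R:T)_l$, and by maximality the only such right ideals are $(R:T)_l$ and $R$ itself. Thus $R/(R:T)_l$ has no nontrivial right ideals, which forces it to be a division ring: for any nonzero $\bar a$ the right ideal $\bar a\,(R/(R:T)_l)$ is all of $R/(R:T)_l$, so $\bar a$ has a right inverse, and the usual argument then upgrades this to a two-sided inverse. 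Write $D:=R/(R:T)_l$.

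Next I would transfer this to $T/R$. Because $(R:T)_l=r.ann_R((T/R)_R)$, the ideal $(R:T)_l$ annihilates the right $R$-module $T/R$, so the $R$-action on $T/R$ factors through $D$; hence $T/R$ is naturally a right $D$-module, i.e.\ a right vector space over the division ring $D$, and its $R$-submodules are precisely its $D$-subspaces. Since $R$ is a proper subring of $T$, this vector space is nonzero. Finally I would invoke the standard fact that every nonzero vector space over a division ring has a maximal proper subspace (choose a basis and delete one vector, or apply Zorn's Lemma to the proper $D$-subspaces). Such a maximal $D$-subspace of $T/R$ is a maximal proper $R$-submodule of $T/R$, and pulling it back along the projection $T\to T/R$ produces a right $R$-submodule $M$ with $R\subseteq M\subsetneq T$ that is maximal among the proper submodules of $T$ containing $R$, as required.

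I do not expect a genuine obstacle here: the whole argument hinges on the single observation that a maximal right ideal which happens to be two-sided yields a division-ring quotient. The only points needing mild care are the identification of $R$-submodules of $T/R$ with $D$-subspaces (immediate once $(R:T)_l$ is seen to annihilate $T/R$) and the appeal to the existence of a maximal subspace in the possibly infinite-dimensional case, which is a routine application of Zorn's Lemma. Note that this corollary is exactly the converse direction to Corollary \ref{t5}, so together they pin down when $T/R$ has a maximal submodule in terms of the one-sided maximality of $(R:T)_l$.
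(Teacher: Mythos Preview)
Your argument is correct and is precisely the reasoning the paper has in mind: the corollary is stated without proof because once one observes that $(R:T)_l$ is a two-sided ideal of $R$ (so that the hypothesis $(R:T)_l\in Max_r(R)$ forces $R/(R:T)_l$ to be a division ring) and that $(R:T)_l$ annihilates $(T/R)_R$, the nonzero module $T/R$ becomes a vector space over this division ring and therefore has a maximal submodule. Your remark that this is a (partial) converse to Corollary~\ref{t5} is also apt; note only that the two statements are not literal converses, since Corollary~\ref{t5} concludes that $(R:T)_l$ is right primitive, which is in general weaker than lying in $Max_r(R)$.
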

\begin{proof}
Since $(R:T)_l\in Max_r(R)$, we conclude that $R/(R:T)_l$ is a division ring. Clearly, $T/R$ is a nonzero right $R/(R:T)_l$-module, and therefore has a maximal submodule.
\end{proof}

In \cite[Theorem 3.8]{azkrc}, it is proved that if $R$ is a maximal subring of a commutative ring $T$, then $R$ is Artinian if and only if $T$ is Artinian and is integral over $R$. An essential key for the proof of this result, by Remark \ref{t6}, is the fact that in this case $(R:T)$ is a maximal ideal of $R$. Now the following is in order.

\begin{prop}\label{t8}
Let $R$ be a right Artinian ring which is a maximal subring of a ring $T$. Then $(R:T)_l$ and $(R:T)_r$ are maximal ideals of $R$. $T/R$ as right $R/(R:T)_l$-modules (resp. $R/(R:T)_r$) is an isotypic semisimple (and therefore has a maximal submodule). In particular, if $R$ a right Artinian local ring which is a maximal subring of a ring $T$, then $(R:T)=(R:T)_l=(R:T)_r$.
\end{prop}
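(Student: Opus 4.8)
The plan is to split the statement into two parts: first the maximality of the two conductor ideals, and then the isotypic semisimplicity of $T/R$ over the corresponding quotient rings.

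For the maximality, I would first observe that a right Artinian ring is zero-dimensional in the sense of this paper, i.e. every prime ideal is maximal. Indeed, if $P$ is a prime ideal of $R$, then $R/P$ is a prime right Artinian ring; its Jacobson radical is a nilpotent two-sided ideal of a semiprime ring and hence vanishes, so $R/P$ is semiprimitive right Artinian, thus semisimple Artinian by Wedderburn--Artin, and being prime it is in fact simple Artinian. Therefore $P$ is maximal. Once $R$ is known to be zero-dimensional, the maximality of $(R:T)_l$ and $(R:T)_r$ is exactly Corollary \ref{t2}(5), which itself rests on Theorem \ref{t1} asserting that these conductors are prime ideals of $R$.

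For the second part, I would use that $(R:T)_l$ is now a maximal two-sided ideal, so $R/(R:T)_l$ is a simple ring which is right Artinian, hence simple Artinian, say isomorphic to $\mathbb{M}_n(D)$ for some division ring $D$; in particular it is semisimple as a ring. Since $(R:T)_l=r.ann_R((T/R)_R)$, the right $R$-module $T/R$ is faithfully a module over $R/(R:T)_l$. Every module over a semisimple Artinian ring is semisimple, and over a simple Artinian ring there is, up to isomorphism, a single simple module, so $T/R$ is a direct sum of copies of that module, i.e. isotypic. Because $R\subsetneq T$ the module $T/R$ is nonzero, and a nonzero semisimple module always admits a maximal submodule, namely the complement of any one simple summand; this gives the final assertion. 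The claim for $(R:T)_r$ is entirely symmetric: $R/(R:T)_r$ is simple Artinian and therefore also left Artinian and semisimple, and $T/R$ viewed as a left $R/(R:T)_r$-module (whose left annihilator is $(R:T)_r$) is isotypic semisimple by the same reasoning.

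The only genuinely structural step, and hence the point I expect to be the main obstacle, is the passage from ``simple and right Artinian'' to ``simple Artinian, and so semisimple'' for the quotients $R/(R:T)_l$ and $R/(R:T)_r$; this is where one must invoke the vanishing of the Jacobson radical in a semiprime Artinian ring together with the Wedderburn--Artin structure theorem. Once this is secured, the semisimplicity and the isotypicity of $T/R$ follow formally from the representation theory of semisimple rings, and the existence of maximal submodules is then immediate.
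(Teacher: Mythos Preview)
Your proposal is correct and follows essentially the same route as the paper's proof: both invoke Theorem~\ref{t1} to get primeness, then use that a prime right Artinian ring is isomorphic to $\mathbb{M}_n(D)$ (hence simple Artinian), and finally deduce that $T/R$ is an isotypic semisimple module over this matrix ring because such a ring has a unique simple module. Your detour through Corollary~\ref{t2}(5) is unnecessary since the paper argues maximality directly, and the step you flag as the ``main obstacle'' (prime plus right Artinian implies simple Artinian) is precisely what the paper dispatches in one line via Wedderburn--Artin, so there is no real divergence.
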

\begin{proof}
By Lemma \ref{t1}, $P:=(R:T)_l$ is a prime ideal of $R$ and since $R$ is a right Artinian ring we immediately conclude that $R/P\cong \mathbb{M}_n(D)$, for some division ring $D$ and a natural number $n$. Hence $R/P$ is a simple ring and therefore $P$ is a maximal ideal of $R$. Also note that $T/R$ is a nonzero right $R/P$-module. Thus $T/R$ is a semisimple $R/P$-module. Finally note that since $R/P\cong \mathbb{M}_n(D)$, we immediately conclude that each simple component of $T/R$ is isomorphic to other one. The proof for $(R:T)_r$ is similar. The final part is evident.
\end{proof}

Next, we want to discuss when the conductor ideals of a maximal subring $R$ of a ring $T$ are completely prime (right/left) ideals in $R$ or $T$. We remind the reader that
a proper right ideal $P$ of a ring $S$ is called completely prime right ideal if for each $a,b\in S$, whenever $aP\subseteq P$ and $ab\in P$, then $a\in P$ or $b\in P$, see \cite{reyes}. A prime ideal of a ring $S$ is called completely prime if $S/P$ is a domain.

\begin{prop}\label{t10}
Let $R$ be a maximal subring of a ring $T$ and $P=(R:T)_r$. If $P$ is a completely prime right ideal of $T$, then $R/P$ is a domain (i.e., $P$ is a completely prime ideal of $R$) and $T/P$ is a torsionfree left $R/P$-module. Conversely, if $P$ is a completely prime ideal of $R$ and $T/P$ is a torsionfree left $R/P$-module, then either $P$ is an ideal of $T$ (and therefore $(R:T)=P$) or $P$ is a completely prime right ideal of $T$ and $R=\mathbb{I}_T(P)$.
\end{prop}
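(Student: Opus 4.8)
The plan is to base everything on two structural observations about $P=(R:T)_r$, and then to let the maximality of $R$ produce the dichotomy in the converse. The first observation is the left-stability of $P$ under $R$: for $a\in R$ and $x\in P$ we have $(ax)T=a(xT)\subseteq aR\subseteq R$, so $ax\in P$; hence $RP\subseteq P$. Since also $PT\subseteq P$ (because $P$ is by definition a right ideal of $T$), the quotient $T/P$ carries a natural left $R/P$-module structure, and it is this module that the torsionfree hypothesis refers to. The second observation is that $R\subseteq\mathbb{I}_T(P)$, again by $RP\subseteq P$, so $\mathbb{I}_T(P)$ is a subring of $T$ lying between $R$ and $T$; by maximality it equals either $R$ or $T$.

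For the forward direction, I would assume $P$ is a completely prime right ideal of $T$ and read off both conclusions directly. Given $a,b\in R$ with $ab\in P$, the stability $aP\subseteq P$ lets me apply the completely prime right ideal test to $a,b\in T$, yielding $a\in P$ or $b\in P$; thus $R/P$ is a domain. For torsionfreeness I take $r\in R$, $t\in T$ with $rt\in P$; once more $rP\subseteq P$, so the same test gives $r\in P$ or $t\in P$, i.e.\ $\bar r=0$ or $\bar t=0$ in the $R/P$-module $T/P$, which is exactly the torsionfree property over the domain $R/P$.

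For the converse I would assume $R/P$ is a domain and $T/P$ is torsionfree, and split according to the idealizer dichotomy above. If $\mathbb{I}_T(P)=T$, then $TP\subseteq P$, so $P$ is a two-sided ideal of $T$ contained in $R$; together with $(R:T)\subseteq (R:T)_r=P$ this forces $P=(R:T)$, the first alternative. If instead $\mathbb{I}_T(P)=R$, I must verify that $P$ is a completely prime right ideal of $T$ and record $R=\mathbb{I}_T(P)$. So let $a,b\in T$ satisfy $aP\subseteq P$ and $ab\in P$. The condition $aP\subseteq P$ says precisely $a\in\mathbb{I}_T(P)=R$; hence $\bar a\in R/P$ and $\bar a\,\bar b=\overline{ab}=0$ in $T/P$, and torsionfreeness over the domain $R/P$ gives $\bar a=0$ or $\bar b=0$, i.e.\ $a\in P$ or $b\in P$, as required.

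The point that has to be handled with care, and which is the real engine of the argument, is the interplay between the $T$-level hypothesis ``$aP\subseteq P$'' and the $R/P$-module statement. In the forward direction this is supplied for free by the stability $RP\subseteq P$ (so the completely prime right ideal test is always applicable when the left factor lies in $R$); in the converse it is the identification $\mathbb{I}_T(P)=R$ that converts ``$aP\subseteq P$'' into ``$a\in R$'', which is exactly what is needed to feed $a$ into the left $R/P$-module $T/P$ and invoke torsionfreeness. I expect no serious obstacle beyond keeping the module structure on $T/P$ well-defined (which needs both $RP\subseteq P$ and $PT\subseteq P$) and checking that $P$ is proper in $T$ so that ``completely prime right ideal'' is meaningful, both of which follow immediately since $1\cdot T=T\not\subseteq R$.
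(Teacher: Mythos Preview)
Your proof is correct and follows essentially the same line as the paper's own argument: in the forward direction you use $RP\subseteq P$ to feed the completely prime right ideal test with left factor in $R$, and in the converse you invoke the maximality dichotomy $\mathbb{I}_T(P)\in\{R,T\}$ (the paper phrases this as ``$P$ is an ideal of $T$ or not'', which is the same thing) to reduce the condition $aP\subseteq P$ to $a\in R$ and then apply torsionfreeness. The only addition you make over the paper is the explicit check that $P$ is proper in $T$, which is harmless.
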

\begin{proof}
First assume that $P$ is a completely prime right ideal of $T$. Let $a,b\in R$ and $ab\in P$. Since $P$ is an ideal of $R$, we conclude that $aP\subseteq P$. Therefore $a\in P$ or $b\in P$, for $P$ is a completely prime right ideal of $T$. Thus $R/P$ is a domain. Since $P=(R:T)_r$ is a right ideal of $T$ and $P$ is an ideal of $R$ (i.e., $PT\subseteq R$ and therefore $P(T/R)=0$), we conclude that $T/P$ is a left $R/P$-module. Now we show that $T/P$ is a torsionfree left $R/P$-module. Hence assume that $(r+P)(t+P)=0$, where $r\in R$ and $t\in T$. Thus $rt\in P$ and $rP\subseteq P$. Since $P$ is a completely prime right ideal of $T$, we infer that either $r\in P$ or $t\in P$ and therefore $T/P$ is a torsionfree left $R/P$-module. Conversely, suppose that $R/P$ is a domain and $T/P$ is a torsionfree left $R/P$-module. Let $a,b\in T$ such that $aP\subseteq P$ and $ab\in P$. We have two cases, either $P$ is an ideal of $T$ (and therefore $P=(R:T)$) or $P$ is not an ideal of $T$. In the latter case, since $P$ is an ideal of $R$ and $R$ is a maximal subring of $T$, we immediately conclude that $R=\mathbb{I}_T(P)$. Thus $a\in R$ and since $ab\in P$, we have $(a+P)(b+P)=0$. Therefore $a\in P$ or $b\in P$, for $T/P$ is a torsionfree left $R/P$-module. Thus $P$ is a completely prime right ideal of $T$.
\end{proof}

Now the following is in order.

\begin{cor}\label{t11}
Let $R$ be a maximal subring of a ring $T$ and $P=(R:T)_r\in Max_r(T)$. Then $P$ is a completely prime ideal of $R$ (i.e., $R/P$ is a domain) and $T/P$ is a torsionfree left $R/P$-module.
\end{cor}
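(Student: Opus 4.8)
The plan is to reduce everything to the forward implication of Proposition \ref{t10}. That proposition already tells us that once we know $P=(R:T)_r$ is a completely prime right ideal of $T$, it follows that $R/P$ is a domain and that $T/P$ is a torsionfree left $R/P$-module, which is precisely the desired conclusion. So the entire content of the corollary is the claim that a maximal right ideal of $T$ is automatically a completely prime right ideal in the sense of the definition preceding Proposition \ref{t10}.

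To establish this, I would take $a,b\in T$ with $aP\subseteq P$ and $ab\in P$, assume $a\notin P$, and prove $b\in P$. The key device is the set $Q:=\{x\in T\mid ax\in P\}$. First I would check that $Q$ is a right ideal of $T$: if $ax\in P$ and $s\in T$, then $a(xs)=(ax)s\in P$ because $P$ is a right ideal. Next, the hypothesis $aP\subseteq P$ gives $P\subseteq Q$, while $a\cdot 1=a\notin P$ shows $1\notin Q$, so $Q$ is a proper right ideal of $T$ lying between $P$ and $T$. Since $P\in Max_r(T)$, maximality forces $Q=P$; as $ab\in P$ means $b\in Q$, we conclude $b\in P$. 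This shows $P$ is a completely prime right ideal of $T$, and then applying the first assertion of Proposition \ref{t10} finishes the proof.

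The main obstacle, though it is a short one, is recognizing that the completely prime right ideal property for a maximal right ideal is exactly the statement that the preimage right ideal $\{x\mid ax\in P\}$ cannot strictly contain $P$ without being all of $T$. Equivalently, one may phrase this through Schur's lemma: the condition $aP\subseteq P$ makes left multiplication by $a$ a well-defined endomorphism of the simple right $T$-module $T/P$, and since $\mathrm{End}_T(T/P)$ is a division ring, a nonzero such endomorphism is injective, so $ab\in P$ together with $a\notin P$ forces $b\in P$. Either formulation suffices; the elementary preimage argument is the cleanest to write, and it keeps the proof a genuine one-line corollary of the preceding proposition.
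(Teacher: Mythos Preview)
Your proof is correct and follows the same strategy as the paper: show that $P$ is a completely prime right ideal of $T$, then invoke Proposition~\ref{t10}. The only difference is that the paper cites \cite[(A) of Corollary~2.10]{reyes} for the fact that every maximal right ideal is completely prime right, whereas you supply a short self-contained argument (the preimage right ideal $Q=\{x\in T\mid ax\in P\}$, or equivalently Schur's lemma) for that same fact.
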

\begin{proof}
Note that by \cite[$(A)$ of Corollary 2.10]{reyes}, $P$ is a completely prime right ideal of $T$ and therefore we are done by the previous proposition.
\end{proof}

If we add the assumption that $T$ is a right Artinian ring in the first part of Proposition \ref{t10}, we obtain the following better conclusion.

\begin{cor}\label{t12}
Let $R$ be a maximal subring of a right Artinian ring $T$ and $P=(R:T)_r$ be a completely prime right ideal of $T$. The either $T/P$ or $R/P$ is a division ring.
\end{cor}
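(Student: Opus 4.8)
The plan is to split on whether $P$ is a two-sided ideal of $T$, which is exactly the dichotomy already isolated in Proposition \ref{t10}. Since $P=(R:T)_r$ is a completely prime right ideal of $T$, the forward direction of Proposition \ref{t10} immediately gives that $R/P$ is a domain, and this is the fact I will lean on throughout. I also record the standing consequence of the hypothesis: $T$ being right Artinian forces $T_T$ to have finite length (by Hopkins--Levitzki $T$ is right Noetherian, so $T_T$ has a composition series), hence every cyclic right $T$-module, and in particular $T/P$, has finite length.

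In the first case $P$ is a two-sided ideal of $T$, so $P=(R:T)$ and $T/P$ is a ring. For $a,b\in T$ with $ab\in P$ we automatically have $aP\subseteq P$, so the completely-prime-right-ideal hypothesis yields $a\in P$ or $b\in P$; thus $T/P$ is a domain. A domain that is right Artinian is a division ring: for nonzero $a$ the descending chain $aT\supseteq a^2T\supseteq\cdots$ stabilizes, giving $a^n=a^{n+1}d$, whence $a^n(1-ad)=0$ and so $ad=1$, and in a domain a one-sided inverse is automatically two-sided. Hence $T/P$ is a division ring and we are done in this case.

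In the remaining case $P$ is not a two-sided ideal of $T$, and then Proposition \ref{t10} gives $R=\mathbb{I}_T(P)$. The key step is the standard idealizer identification $R/P=\mathbb{I}_T(P)/P\cong \mathrm{End}_T(T/P)$, sending $r+P$ to the map $x+P\mapsto rx+P$ on $T/P$. First I would check this is a ring isomorphism: the map is a right $T$-module endomorphism precisely because $rP\subseteq P$, its kernel is $\{r:rT\subseteq P\}=P$, it respects composition, and it is onto since any endomorphism is determined by the image of $1+P$, whose representative must lie in $\mathbb{I}_T(P)$. Since $R/P$ is a domain, so is $E:=\mathrm{End}_T(T/P)$; in particular $E$ has no nontrivial idempotents, so the finite-length module $T/P$ is indecomposable. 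Now Fitting's Lemma applies: for a module of finite length, every endomorphism is either nilpotent or an automorphism. As $E$ is a domain it has no nonzero nilpotents, so every nonzero element of $E$ is an automorphism, i.e.\ a unit of $E$; hence $E$, and therefore $R/P$, is a division ring.

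The only delicate points are verifying that the idealizer assignment is a genuine ring isomorphism onto $\mathrm{End}_T(T/P)$ (well-definedness, kernel, and compatibility with composition all need to be checked) and invoking Fitting's Lemma in the correct form for a finite-length indecomposable module. I expect the idealizer identification to be the main thing to get exactly right; everything else is a short deduction from Proposition \ref{t10} and the finite-length property supplied by right Artinianness.
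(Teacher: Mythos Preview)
Your proof is correct and uses the same case split as the paper (whether $P$ is two-sided in $T$), but the execution in each branch differs. The paper dispatches both cases by citing \cite[(B) of Corollary~2.10]{reyes}, which in the right Artinian setting forces a completely prime right ideal to be a maximal right ideal of its idealizer; this yields $P\in Max_r(T)$ in the first case and $P\in Max_r(R)$ in the second, and the division-ring conclusions follow since $P$ is two-sided in the relevant ring. You instead argue from first principles: in Case~1 you verify directly that $T/P$ is a right Artinian domain, and in Case~2 you pass through the idealizer identification $R/P\cong\mathrm{End}((T/P)_T)$ and invoke Fitting's Lemma on the finite-length module $T/P$. The identification you flag as ``delicate'' is exactly the content of Remark~\ref{t14} (citing \cite[Lemma~1.3]{rob}), so no new work is needed there. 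In effect your Case~2 is a self-contained proof of the instance of Reyes's result that the paper quotes; the paper's route is shorter by outsourcing, while yours makes the underlying mechanism (finite length plus a domain endomorphism ring forces a division ring via Fitting) explicit and avoids the external reference.
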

\begin{proof}
We have two cases, either $P$ is an ideal of $T$ or not. First assume that $P$ is an ideal of $T$, then by \cite[$(B)$ of Corollary 2.10]{reyes}, we conclude that $P\in Max_r(T)$ and therefore $T/P$ is a division ring. If $P$ is not an ideal of $T$, then $R=\mathbb{I}_T(P)$, for $R$ is a maximal subring of $T$ and $P$ is an ideal of $R$. Again by \cite[$(B)$ of Corollary 2.10]{reyes}, $P\in Max_r(R)$ and hence $R/P$ is a division ring.
\end{proof}

When $T$ is a (left/right) duo ring and $R$ is a maximal subring of $R$, then $(R:T)$ is a completely prime ideal of $R$, by our next fact.

\begin{prop}\label{t28}
Let $R$ be a maximal subring of a left duo ring $T$. Then $(R:T)=(R:T)_l$ is a completely prime ideal of $R$. Moreover, if $T$ is a duo ring, then $(R:T)=(R:T)_l=(R:T)_r$ is a completely prime ideal of $R$.
\end{prop}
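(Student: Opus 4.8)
The plan is to exploit the left duo hypothesis to collapse the one-sided conductor $(R:T)_l$ onto the two-sided conductor $(R:T)$, and then to upgrade the primeness of $(R:T)_l$ already supplied by Theorem \ref{t1} to complete primeness using the maximality of $R$.

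First I would establish the equality $(R:T)=(R:T)_l$. By definition $(R:T)_l$ is the largest \emph{left} ideal of $T$ lying inside $R$; since $T$ is left duo this left ideal is automatically two-sided in $T$, so it is an ideal of $T$ contained in $R$ and hence $(R:T)_l\subseteq (R:T)$. The reverse inclusion $(R:T)\subseteq (R:T)_l$ always holds, being part of the chain $(R:T)_l(R:T)_r\subseteq (R:T)\subseteq (R:T)_l\cap (R:T)_r$ recorded at the start of the section, so the two coincide.

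Next I would prove that $P:=(R:T)=(R:T)_l$ is completely prime in $R$, i.e. that $R/P$ is a domain. Take $a,b\in R$ with $ab\in P$ and suppose $a\notin P$; I must force $b\in P$. From $a\notin (R:T)_l$ we get $Ta\nsubseteq R$, and because $T$ is left duo the left ideal $Ta$ is two-sided, so $R+Ta$ is a genuine subring of $T$ strictly larger than $R$. Maximality of $R$ then gives $R+Ta=T$. Finally, $ab\in (R:T)_l$ says exactly $Tab\subseteq R$, so
\[
Tb=(R+Ta)b=Rb+Tab\subseteq R,
\]
which means $b\in (R:T)_l=P$. Hence $R/P$ has no zero divisors and $P$ is completely prime.

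For the \emph{moreover} clause, if $T$ is a two-sided duo ring then $(R:T)_r$ is a right ideal of $T$ that is likewise automatically two-sided, so the argument of the first step yields $(R:T)_r\subseteq (R:T)$ and hence $(R:T)=(R:T)_r$ as well; combined with the left-duo conclusion this gives $(R:T)=(R:T)_l=(R:T)_r$, and complete primeness has already been shown. The only point requiring care is the completely-prime step: one must check that $R+Ta$ really is closed under multiplication, which is exactly where the two-sidedness of $Ta$ forced by the left duo hypothesis is used; without it the maximality argument would not apply.
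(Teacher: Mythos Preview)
Your proof is correct and follows essentially the same route as the paper: you use the left duo hypothesis to make $(R:T)_l$ (and, in the duo case, $(R:T)_r$) a two-sided ideal of $T$, forcing it to equal $(R:T)$, and then run the same maximality argument with $R+Ta=T$ and $Tb=Rb+Tab\subseteq R$ to obtain complete primeness. Your explicit remark that the two-sidedness of $Ta$ is precisely what guarantees $R+Ta$ is a subring is the one point the paper leaves implicit.
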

\begin{proof}
Since $T$ is a left duo ring, we immediately conclude that $(R:T)=(R:T)_l$. Now assume that $a,b\in R$ such that $ab\in (R:T)$ and $a\notin (R:T)=(R:T)_l$. Therefore $Tab\subseteq R$ and $Ta\nsubseteq R$. Since $R$ is a maximal subring of $T$ and $Ta$ is an ideal of $T$, we infer that $R+Ta=T$. Hence $Tb=(R+Ta)b=Rb+Tab\subseteq R$, i.e., $b\in (R:T)=(R:T)_l$. Thus $(R:T)=(R:T)_l$ is a completely prime ideal of $R$. The final part is evident.
\end{proof}

In the next proposition we have a relation between the centralizer of a maximal subring $R$ of a ring $T$ and the conductor ideals.

\begin{prop}\label{t19}
Let $R$ be a maximal subring of a ring $T$. The one of the following holds:
\begin{enumerate}
\item $C_T(R)\subseteq R$. Therefore $R$ contains the center of $T$.
\item $(R:T)=(R:T)_l=(R:T)_r$, and there exists $\alpha\in T$ such that $T=R[\alpha]$, where $\alpha r=r\alpha$, for each $r\in R$.
\end{enumerate}
\end{prop}
\begin{proof}
Assume that $R$ does not contain $C_T(R)$. If $R=\mathbb{I}_T((R:T)_r)$, then clearly $C_T(R)\subseteq C_T((R:T)_r)\subseteq \mathbb{I}_T((R:T)_r)=R$, which is absurd. Thus $(R:T)_r$ is an ideal of $T$. Similarly, $(R:T)_l$ is an ideal of $T$ and hence the first part of $(2)$ holds. The second part is evident for $C_T(R)\nsubseteq R$.
\end{proof}

As we see in Proposition \ref{t10}, and in the proofs of Corollary \ref{t12} and the previous proposition, a maximal subring $R$ of a ring $T$ is of the form an idealizer of a one-sided ideal of $T$. In the next remark we exactly determine when $R$ is of the form an idealizer of $T$.

\begin{rem}\label{t13}
Let $R$ be a maximal subring of a ring $T$. Then exactly  one of the following holds:
\begin{enumerate}
\item $(R:T)=(R:T)_l\subsetneq (R:T)_r$ and $R=\mathbb{I}_T((R:T)_r)$.
\item $(R:T)=(R:T)_r\subsetneq (R:T)_l$ and $R=\mathbb{I}_T((R:T)_l)$.
\item $(R:T)\subsetneq (R:T)_r, (R:T)_l$ and $R=\mathbb{I}_T((R:T)_r)=\mathbb{I}_T((R:T)_l)$.
\item $(R:T)=(R:T)_l=(R:T)_r$.
\end{enumerate}
Moreover, if $(R:T)$ is not a prime ideal of $R$, then $(3)$ holds. To see this, note that $(R:T)\subseteq (R:T)_l\cap (R:T)_r$. Hence if either $(R:T)_l$ or $(R:T)_r$ is an ideal of $T$, then we immediately conclude that $(R:T)=(R:T)_l$ or $(R:T)=(R:T)_r$. For instance, if $(R:T)_l$ is an ideal of $T$, but $(R:T)_r$ is not an ideal of $T$, then $(R:T)=(R:T)_l$ and since $(R:T)_r$ is an ideal of $R$, we conclude that $R=\mathbb{I}_T((R:T)_r)$, for $R$ is a maximal subring of $T$ which is contained in the proper subring $\mathbb{I}_T((R:T)_r)$ of $T$. Thus $(1)$ holds. By a similar argument we see $(2)$. If $(R:T)_l$ and $(R:T)_r$ are ideals of $T$, then clearly $(4)$ holds. Finally, assume that $(R:T)_l$ and $(R:T)_r$ are not ideals of $T$. Since these are ideals of $R$ and $R$ is a maximal subring of $T$, we deduce that $R=\mathbb{I}_T((R:T)_l)=\mathbb{I}_T((R:T)_r)$. The final part is evident by Lemma \ref{t1}.
\end{rem}

\begin{rem}\label{t14}
Let $T$ be a ring and $A$ be a right ideal of $T$, then the map $\phi:\mathbb{I}(A)/A\longrightarrow End((T/A)_T)$, where $\phi(r+A)=f_{r+A}$ and $f_{r+A}(x+T)=rx+A$, for each $r\in\mathbb{I}(A)$ and $x\in T$ is a ring isomorphism, see \cite[Lemma 1.3]{rob}. In particular, if $R$ is a maximal subring of $T$ and $A=(R:T)_r$, then either $R/A\cong End((T/A)_T)$ (and therefore $End((T/A)_T)$ is a prime ring) or $\phi(R/A)$ is a maximal subring of $End((T/A)_T)\cong T/A$.
\end{rem}

As we see in the previous remarks, there are a closed relation between the idealizer of a one-sided ideal $A$ of a ring $T$ and the endomorphism ring of $T/A$. In our next results in this section we focus on endomorphism rings to find some facts for maximal subrings. As we see below, there exist some algebraic properties between the quotient rings of a maximal subring $R$ of a ring $T$ respect to conductor ideals and the endomorphism rings of $T/R$ as left/right $R$-modules and as $\mathbb{Z}$-module. First we need the following lemma similar to previous remark.

\begin{lem}\label{t15}
Let $R$ be a subring of a ring $T$ and $A=(R:T)_r$. Then the map $\psi: R/A\longrightarrow End((T/R)_R)$, where $\psi(r+A)=g_{r+A}$ and $g_{r+A}(x+R)=rx+R$, is a ring monomorphism, i.e., up to ring isomorphism, $R/A$ is a subring of  $End((T/R)_R)$. In particular, up to ring isomorphism $R/A$ is a subring of $End_{\mathbb{Z}}(T/R)$.
\end{lem}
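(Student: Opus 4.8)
The plan is to recognize each $g_{r+A}$ as the descent to the quotient $T/R$ of the left-multiplication map $\ell_r\colon T\longrightarrow T$, $t\mapsto rt$, and then to verify in turn that $\psi$ lands in $End((T/R)_R)$, is independent of the chosen representative, is a unital ring homomorphism, and has trivial kernel. Throughout I would use only the defining property $A=(R:T)_r=\{x\in T\ |\ xT\subseteq R\}$, which in particular makes $A$ a two-sided ideal of $R$: for $r\in R$ and $a\in A$ one has $(ra)T=r(aT)\subseteq rR\subseteq R$ and $(ar)T\subseteq aT\subseteq R$, so $ra,ar\in A$, whence $R/A$ is a ring. I would then check that each $g_{r+A}$ is a well-defined right $R$-module endomorphism of $T/R$: if $x-x'\in R$ then $r(x-x')\in R$ because $R$ is a subring, so $rx+R=rx'+R$; additivity is immediate, and right $R$-linearity is associativity, since for $s\in R$ one has $g_{r+A}((x+R)s)=rxs+R=(g_{r+A}(x+R))s$. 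Hence $g_{r+A}\in End((T/R)_R)$.

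Next I would establish well-definedness of $\psi$ itself, the one step that genuinely uses the definition of $A$: if $r+A=r'+A$, i.e.\ $r-r'\in A$, then $(r-r')T\subseteq R$ by the definition of $(R:T)_r$, so $(r-r')x\in R$ for every $x\in T$, and therefore $g_{r+A}=g_{r'+A}$. I would then verify that $\psi$ is a unital ring homomorphism. Additivity is routine. For multiplicativity the only point requiring care is the composition convention in $End((T/R)_R)$: with the standard convention $(fg)(m)=f(g(m))$ one computes $(g_{r+A}g_{r'+A})(x+R)=g_{r+A}(r'x+R)=rr'x+R=g_{rr'+A}(x+R)$, so $\psi(rr'+A)=\psi(r+A)\psi(r'+A)$; and $g_{1+A}=\mathrm{id}_{T/R}$ gives unitality. (Under the opposite convention $\psi$ would come out an anti-homomorphism, so this is the place to pin the convention down.)

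Finally I would prove injectivity: if $\psi(r+A)=0$, then $rx\in R$ for all $x\in T$, i.e.\ $rT\subseteq R$, which is exactly $r\in(R:T)_r=A$, so $r+A=A$ and $\ker\psi=0$. This yields the embedding $R/A\hookrightarrow End((T/R)_R)$. The ``in particular'' clause is then immediate: every right $R$-linear endomorphism of $T/R$ is in particular additive, so $End((T/R)_R)$ is a subring of $End_{\mathbb{Z}}(T/R)$ (same pointwise addition, same composition, same identity), and composing the two inclusions realizes $R/A$ as a subring of $End_{\mathbb{Z}}(T/R)$. I expect no substantive obstacle here; the only real care needed is organizational, namely keeping the two distinct well-definedness checks apart and fixing the composition convention, since both the homomorphism property and injectivity reduce directly to the defining equation $aT\subseteq R$ of $A$.
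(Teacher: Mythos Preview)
Your proof is correct and follows the same approach as the paper: verify that $\psi$ is a well-defined ring homomorphism and then show its kernel is trivial via the defining property of $A=(R:T)_r$. The paper's own proof is actually terser than yours---it defers the ring-homomorphism verification to \cite[Lemma 1.3]{rob} and spells out only the injectivity step---so your self-contained treatment of well-definedness, $R$-linearity, and multiplicativity fills in exactly what the paper leaves to the reference.
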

\begin{proof}
The proof of $\psi$ is a ring homomorphism is similar to \cite[Lemma 1.3]{rob}. Only we prove that $Ker(\psi)=0$. To see this note that $r+A\in Ker(\psi)\Longleftrightarrow g_{r+A}=0\Longleftrightarrow g_{r+A}(t+R)=0$ for each $t\in T$, $\Longleftrightarrow rt+R=0$, for each $t\in T$, i.e., $rT+R=0\Longleftrightarrow rT\subseteq R\Longleftrightarrow r\in A$.
\end{proof}

We remind that since we always write maps on the left, then the Remark \ref{t14}, for left ideals and Lemma \ref{t15}, for $(R:T)_l$, have some different statements. Hence assume that $T$ be a ring and $B$ be a left ideal of $T$, then $\mathbb{I}(B)/B\cong (End({}_T(T/B)))^{op}$, where the notation $op$ means the opposite of a ring. Also, if $R$ is a subring of $T$ and $B=(R:T)_l$, then the ring $R/B$ embeds in $(End({}_R(T/R)))^{op}$ and therefore $(R/B)^{op}$ embeds in the ring $End({}_R(T/R))$ and hence in the ring $End_{\mathbb{Z}}(T/R)$. It is clear that if $S$ is any ring, then $Char(S)=Char(S^{op})$ and $S$ is a prime ring if and only if $S^{op}$ is a prime ring. Now the following is in order.

\begin{prop}\label{t16}
Let $R$ be a maximal subring of a ring $T$. Then $Char(End_{\mathbb{Z}}(T/R))$ is either $0$ or is a prime number. Moreover, $Char(R/(R:T)_l)=Char(R/(R:T)_r)=Char(R/(R:T))$.
\end{prop}
\begin{proof}
By Lemma \ref{t1}, $(R/(R:T)_l)^{op}$ and $R/(R:T)_r$ are prime rings, which are subrings of $End_{\mathbb{Z}}(T/R)$, by the previous lemma and its comment. Hence $Char(R/(R:T)_l)=Char(End_{\mathbb{Z}}(T/R))=Char(R/(R:T)_r)$. Now by the facts that the center of a prime ring is an integral domain and the characteristic of an integral domain is either $0$ or is a prime number, we conclude the first part of the proposition. For the final part, we have two cases: If $Char(R/(R:T)_l)=Char(R/(R:T)_r)=0$, then it is clear that $Char(R/(R:T))=0$, for $R/(R:T)_l$ and $R/(R:T)_r$ are quotients of the ring $R/(R:T)$. Hence assume that $Char(R/(R:T)_l)=Char(R/(R:T)_r)=p$, where $p$ is a prime number. Thus $pT=Tp\subseteq R$ and clearly $pT=Tp=TpT\subseteq R$ and thus $Char(R/(R:T))=p$.
\end{proof}

Now we have the following immediate corollary.

\begin{cor}
Let $R$ be a maximal subring of a ring $T$. Assume that the map $Char: Spec(R)\longrightarrow \mathbb{N}\cup\{0\}$, where $P\longmapsto Char(R/P)$ is one-one. Then $(R:T)_l=(R:T)_r$, in particular $(R:T)=(R:T)_l=(R:T)_r$ is a prime ideal of $R$ which is an ideal of $T$.
\end{cor}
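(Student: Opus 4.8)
The plan is to obtain this as a short bookkeeping consequence of Theorem~\ref{t1} and Corollary~\ref{t16}, with the injectivity hypothesis doing all the real work. First I would record that, by Theorem~\ref{t1}, both $(R:T)_l$ and $(R:T)_r$ are prime ideals of $R$; in particular they are genuine points of $Spec(R)$, so the hypothesis that the map $Char\colon Spec(R)\longrightarrow\mathbb{N}\cup\{0\}$ is one-one may legitimately be applied to them.

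Next I would invoke Corollary~\ref{t16}, which yields $Char(R/(R:T)_l)=Char(R/(R:T)_r)$. Thus $(R:T)_l$ and $(R:T)_r$ are two prime ideals of $R$ having the same image under the injective map $Char$, and so they must coincide: $(R:T)_l=(R:T)_r$. This already establishes the first assertion, and no further computation is required for it.

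It then remains to upgrade this equality to the statement about $(R:T)$. Since $(R:T)_l$ and $(R:T)_r$ are now equal they are in particular comparable, so Corollary~\ref{t2}(2) applies and tells us both that $(R:T)$ is a prime ideal of $R$ and that $(R:T)$ equals $(R:T)_l$ or $(R:T)_r$; as these agree we conclude $(R:T)=(R:T)_l=(R:T)_r$. Finally, $(R:T)$ is an ideal of $T$ by its very definition as the largest ideal of $T$ contained in $R$, which gives the last clause. (Alternatively, writing $I:=(R:T)_l=(R:T)_r$, one sees directly that $I$ is a left ideal of $T$ as $(R:T)_l$ and a right ideal of $T$ as $(R:T)_r$, hence a two-sided ideal of $T$ inside $R$; combined with the general containment $(R:T)\subseteq(R:T)_l\cap(R:T)_r=I$ this forces $(R:T)=I$.)

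I do not expect a genuine obstacle here, as every step is an immediate appeal to a statement already proved; the corollary is essentially a repackaging of Corollary~\ref{t16} under the extra injectivity assumption. The only point meriting a moment's care is the verification that the common value is an ideal of $T$, but this is automatic either from the definition of $(R:T)$ or from the observation that an ideal which is simultaneously a left and a right ideal of $T$ is two-sided.
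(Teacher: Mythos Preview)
Your proof is correct and follows exactly the approach the paper intends: the corollary is stated immediately after Corollary~\ref{t16} with no proof, as an ``immediate corollary,'' and your argument---using Theorem~\ref{t1} to place both conductors in $Spec(R)$, Corollary~\ref{t16} for the equality of characteristics, and then injectivity---is precisely the implicit reasoning. Your additional care in deriving $(R:T)=(R:T)_l=(R:T)_r$ via Corollary~\ref{t2}(2) (or the alternative direct argument) is appropriate and fills in what the paper leaves to the reader.
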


In the following, we give some examples of rings with the property that assumed in the previous corollary.

\begin{exm}
\begin{enumerate}
\item Assume that $R:=\mathbb{M}_n(\mathbb{Z})$, where $n>1$ is a natural number. Then for each prime number $q$, it is clear that $\mathbb{Z}$ is a maximal subring of $S:=\mathbb{Z}[1/q]$ and therefore $R$ is a maximal subring of $T:=\mathbb{M}_n(S)$. It is obvious that, the map $Char$ mentioned in the previous corollary is one-one for $R$. Hence $(R:T)=(R:T)_l=(R:T)_r$.
\item Let $R=R_1\times\cdots\times R_n$, where $n>1$ and each $R_i$ is a simple ring with $Char(R_i)=p_i$ is a prime number. Assume that $p_i\neq p_j$ for $i\neq j$. If $R$ is a maximal subring of a ring $T$, then $(R:T)=(R:T)_l=(R:T)_r=R_1\times\cdots\times R_{i-1}\times 0\times R_{i+1}\times\cdots\times R_n$, for some $i$.
\item Let $R$ be a one-sided Artinian ring which is a maximal subring of a ring $T$. Assume that $R/J(R)\cong \mathbb{M}_{n_1}(D_1)\times\ldots\mathbb{M}_{n_k}(D_k)$, where $n_i$ and $k$ are natural numbers and $D_i$ is a division ring for each $i$. Then either $(R:T)=(R:T)_l=(R:T)_r$ or there exist $i\neq j$ such that $Char(D_i)=Char(D_j)$.
\end{enumerate}
\end{exm}

In the next two corollary, we obtain more facts about relations between the characteristic, conductor ideals and related endomorphism ring.

\begin{cor}\label{t17}
Let $R$ be a maximal subring of a ring $T$ with $Char(T)$ is not a prime number. Then either there exists a prime number $p$ such that $pT\subseteq (R:T)$ or $Char(T)=Char(End_{\mathbb{Z}}(T/R))=0$.
\end{cor}
\begin{proof}
By Proposition \ref{t16}, $Char(End_{\mathbb{Z}}(T/R))$ is either $0$ or is a prime number. If $Char(End_{\mathbb{Z}}(T/R))=p$, where $p$ is a prime number, then $p.1_{T/R}=0$ (note that $1_{T/R}$ denotes the identity map of $T/R$, i.e., $1_{T/R}(t+R)=t+R$, for each $t\in T$). Hence for each $t\in T$, $pt+R=0$, i.e., $pT\subseteq R$. Thus $0\neq pT\subseteq (R:T)$, (note $Char(T)$ is note a prime number, therefore $pT\neq 0$ and it is clear that $pT=Tp$ is an ideal of $T$). Otherwise, assume that $Char(End_{\mathbb{Z}}(T/R))=0$. If $Char(T)=m>0$, then for each $\phi\in End_{\mathbb{Z}}(T/R)$, we have $m\phi=0$, for $m\phi(t)=\phi(mt)=\phi(0)=0$, for any $t\in T$. Thus $Char(End_{\mathbb{Z}}(T/R))>0$ which is absurd. Therefore $Char(T)=Char(End_{\mathbb{Z}}(T/R))=0$ and hence we are done.
\end{proof}

The following is immediate now.

\begin{cor}\label{t18}
Let $R$ be a maximal subring of a ring $T$. Assume that $Char(T)$ is neither zero or a prime number. Then $(R:T)\neq 0$.
\end{cor}

finally in this section we have some observations about the annihilators of conductor ideals of maximal subrings.

\begin{prop}\label{t25}
Let $R$ be a maximal subring of a ring $T$. If $l.ann_T((R:T))+r.ann_T((R:T))=T$, then $(R:T)^2=0$.
\end{prop}
\begin{proof}
By the assumption, there exist $a\in l.ann_T((R:T))$ and $b\in r.ann_T((R:T))$ such that $a+b=1$. Therefore $(R:T)a=(R:T)$ and $(R:T)=b(R:T)$. Now note that $ab\in l.ann_T((R:T))\cap r.ann_T((R:T))$. Thus $(R:T)^2=(R:T)(R:T)=(R:T)ab(R:T)=0$.
\end{proof}

In particular, if $R$ is a maximal subring of a semiprime ring $T$, and the characteristic of $T$ is neither zero or a prime number, then $l.ann_T((R:T))+r.ann_T((R:T))$ is a proper ideal of $T$, by Corollary \ref{t18} and the previous result. As we see in the next conclusion, in certain rings a left (resp. right) conductor ideal of a maximal subring is the annihilator of the right (resp. left) conductor ideal.

\begin{thm}\label{t26}
Let $R$ be a maximal subring of a ring $T$. Then the following hold:
\begin{enumerate}
\item If $(R:T)_l$ and $(R:T)_r$ are incomparable, then $l.ann_R((R:T)_l)+r.ann_R((R:T)_l)\subseteq (R:T)_r$ and $l.ann_R((R:T)_r)+r.ann_R((R:T)_l)\subseteq (R:T)_l$.
\item If $(R:T)_l(R:T)_r=0$ (in particular, if $(R:T)=0$), then $Min(R)\subseteq \{(R:T)_l,(R:T)_r\}$. In particular, if $(R:T)_l$ and $(R:T)_r$ are incomparable, then $(R:T)_l=l.ann_R((R:T)_r)$ and $(R:T)_r=r.ann_R((R:T)_l)$ are exactly minimal prime ideals of $R$.
\item If $(R:T)_l\cap (R:T)_r=0$ and $(R:T)_l\neq 0\neq (R:T)_r$, then $(R:T)_l=l.ann_R((R:T)_r)=r.ann_R((R:T)_r)$ and $(R:T)_r=r.ann_R((R:T)_l)=l.ann_R((R:T)_l)$ are exactly minimal prime ideals of $R$. In particular, $T$ is not a prime ring.
\item If $R$ is a reduced ring, $(R:T)_l\cap (R:T)_r=0$ and $(R:T)_l\neq 0\neq (R:T)_r$, then $(R:T)_l$ and $(R:T)_r$ are completely prime ideals of $R$. In particular, $R$ embeds in a product of two domains.
\end{enumerate}
\end{thm}
\begin{proof}
$(1)$ is clear, for $l.ann_R((R:T)_l)(R:T)_l=(R:T)_lr.ann_R((R:T)_l)=0\subseteq (R:T)_r$ and $(R:T)_r$ is a prime ideal of $R$, which does not contain $(R:T)_l$ by our assumption. For $(2)$, first note that $(R:T)_l(R:T)_r\subseteq (R:T)$, hence if $(R:T)=0$, we conclude that $(R:T)_l(R:T)_r=0$. Hence assume that $(R:T)_l(R:T)_r=0$. This immediately shows that $Min(R)\subseteq \{(R:T)_l, (R:T)_r\}$, by Lemma \ref{t1}. Now assume that $(R:T)_l$ and $(R:T)_r$ are incomparable. Let $P=(R:T)_l$ and $I=(R:T)_r$. Thus $PI=0$ and $I\neq 0$ (by incomparability). It is clear that $P\subseteq l.ann_R(I)$. Now since $l.ann_R(I)I=0\subseteq P$ and $P$ is a prime ideal of $R$ which does not contain $I$, we deduce that $P=l.ann_R(I)$. Finally, if $P$ is not a minimal ideal of $R$, then let $Q$ be a prime ideal which properly is contained in $P$. Then $l.ann_R(I)I=0\subseteq Q\subsetneq P=l.ann_R(I)$ implies that $I\subseteq Q\subseteq P$, which is absurd for $I$ and $P$ are incomparable. Hence $P$ is a minimal prime ideal of $R$. Similarly, $(R:T)_r=r.ann_R((R:T)_l)$ is a minimal prime ideal of $R$ and hence $(2)$ holds. For $(3)$, note that since $(R:T)_l$ and $(R:T)_r$ are ideals of $R$, then we infer that $(R:T)_l(R:T)_r$ and $(R:T)_r(R:T)_l$ are subsets of $(R:T)_l\cap (R:T)_r$. Hence if $(R:T)_l\cap (R:T)_r=0$, we conclude that $(R:T)_l(R:T)_r=0$ and $(R:T)_r(R:T)_l=0$. By a similar argument of $(2)$, the conclusion of $(3)$ can be proved. For the final part of $(3)$, since $0\neq (R:T)_l=r.ann_R((R:T)_r)\subseteq r.ann_T((R:T)_r)$, we immediately deduce that $T$ is not a prime ring. For $(4)$, since $R$ is reduced, we conclude that $(R:T)_l$ and $(R:T)_r$ are incomparable and therefore by $(2)$ or $(3)$, are minimal prime ideal of $R$. Hence by \cite[Lemma 12.6]{lam}, $(R:T)_l$ and $(R:T)_r$ are completely prime ideals of $R$, i.e., $R/(R:T)_l$ and $R/(R:T)_r$ are domains and clearly $R$ embeds in product of them.
\end{proof}

By $(2)$ of the previous theorem we have the following immediate result.

\begin{cor}\label{t27}
Let $R$ be a ring which is a maximal subring of a ring $T$ with $|Min(R)|\geq 3$, then $(R:T)$ and $(R:T)_l(R:T)_r$ are nonzero.
\end{cor}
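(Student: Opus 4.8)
The plan is to derive this directly from Theorem \ref{t26}(2), treating it as essentially a counting argument. The key observation is that the set $\{(R:T)_l,(R:T)_r\}$ has at most two elements, so the hypothesis $|Min(R)|\geq 3$ forces $Min(R)\nsubseteq\{(R:T)_l,(R:T)_r\}$. Thus the strategy is to argue by contradiction against each of the two vanishing conclusions separately, invoking part (2) of Theorem \ref{t26}.

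Concretely, I would first suppose $(R:T)=0$. By Theorem \ref{t26}(2) this yields $Min(R)\subseteq\{(R:T)_l,(R:T)_r\}$, whence $|Min(R)|\leq 2$, contradicting $|Min(R)|\geq 3$. Hence $(R:T)\neq 0$. Next I would suppose $(R:T)_l(R:T)_r=0$; again Theorem \ref{t26}(2) gives $Min(R)\subseteq\{(R:T)_l,(R:T)_r\}$ and the same cardinality contradiction. Hence $(R:T)_l(R:T)_r\neq 0$. Both nonvanishing claims follow.

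There is no real obstacle here: the content was already established in Theorem \ref{t26}(2), and this corollary merely repackages its contrapositive under the numerical hypothesis $|Min(R)|\geq 3$. The only point to be careful about is that part (2) is a disjunctive hypothesis (``$(R:T)=0$ \emph{or} $(R:T)_l(R:T)_r=0$''), so each of the two implications $(R:T)=0\Rightarrow Min(R)\subseteq\{(R:T)_l,(R:T)_r\}$ and $(R:T)_l(R:T)_r=0\Rightarrow Min(R)\subseteq\{(R:T)_l,(R:T)_r\}$ must be applied on its own; I would not conflate them, since I need to rule out \emph{both} zero-conclusions independently in order to conclude that \emph{both} $(R:T)$ and $(R:T)_l(R:T)_r$ are nonzero.
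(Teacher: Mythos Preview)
Your proposal is correct and matches the paper's intended approach: the corollary is stated without proof, immediately after Theorem~\ref{t26}, precisely because it is the contrapositive of part~(2). One small simplification: since $(R:T)_l(R:T)_r\subseteq (R:T)$, the vanishing of $(R:T)$ already forces $(R:T)_l(R:T)_r=0$, so it suffices to rule out $(R:T)_l(R:T)_r=0$ once; the two cases need not be handled separately.
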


\section{Integrally closed maximal subrings}

As we mentioned in the introduction of this paper, if $R$ is a maximal subring of a commutative ring $T$ and $R$ is integrally closed in $T$, then $(R:T)$ is a prime ideal in $T$. Now we want to study when $(R:T)_r$ and $(R:T)_l$ are  prime one-sided ideals in non-commutative case. First we begin by the following main result.

\begin{thm}\label{t29}
Let $R$ be a maximal subring of a ring $T$, $P=(R:T)_r$, $a,b\in T$ such that $aTb\subseteq P$. If $a,b\notin P$, then the following hold:
\begin{enumerate}
\item $a$ is right $(2-)$integral over $R$ and $b$ is left $(2-)$integral over $R$.
\item If $a\notin R$, then $T=R+RaR$.
\item If $b\notin R$, then $T=R+RbR$.
\end{enumerate}
In particular, if $b\notin R$, then $a\notin R$; and if $a\notin R$, then either $b\notin R$ or $b\in (R:T)_l$.
\end{thm}
\begin{proof}
First note that by our assumption, $aTbT\subseteq R$, $aT\nsubseteq R$ and $bT\nsubseteq R$. Hence by maximality of $R$ we deduce that $R+RaT=T=R+RbT$. Thus there exist $r_i\in R$ and $s_i\in T$ such that $a=r_0+r_1bs_1+\cdots+r_nbs_n$. By multiplying this equation from left to $a$, we have $a^2=ar_0+ar_1bs_1+\cdots+ar_nbs_n$. Now for each $i$, $ar_ibs_i\in aTbT\subseteq R$. Therefore $r:=ar_1bs_1+\cdots+ar_nbs_n\in R$ and $a^2=ar_0+r$ which shows $a$ is right $(2-)$integral over $R$. By a similar argument (from $aTb\subseteq P\subseteq R$ and $T=R+RaT$) we deduce that $b$ is left $(2-)$integral over $R$. Hence $(1)$ holds. Now we prove $(2)$. Assume that $a\notin R$. We claim that $S:=R+RaR$ is a subring of $T$. To see this it suffices to show that if $x,y,u,v\in R$, then $(xay)(uav)\in S$. We have $(xay)(uav)=xayu(r_0+r_1bs_1+\cdots+r_nbs_n)v=xayur_0v+xayur_1bs_1v+\cdots+xayur_nbs_nv$. The first term of the summation is clear in $RaR$ and therefore in $S$. For the other terms of summation note that since $ayur_ibs_i\in aTbT\subseteq R$, we immediately conclude that $xayur_ibs_iv\in R$. Therefore $xayur_1bs_1v+\cdots+xayur_nbs_nv\in R$. Hence $(xay)(uav)\in S$ and thus $S$ is a subring of $T$, which properly contains $R$ (for $a\notin R$). Thus $S=T$, for $R$ is a maximal subring of $T$. The proof of $(3)$ is similar (by the use of $aTb\subseteq P\subseteq R$). Finally for the final part of the theorem, assume that $b\notin R$ but $a\in R$. Thus $T=R+RbR$ by $(3)$. Therefore $aT=aR+aRbR\subseteq R+aTbT\subseteq R$, i.e., $a\in P$ which is absurd. Now assume that $a\notin R$ but $b\in R$. Thus by $(2)$, $T=R+RaR$ and therefore $Tb=Rb+RaRb\subseteq R$. Thus $b\in (R:T)_l$.
\end{proof}

\begin{rem}\label{t30}
Similar to the previous result and its proof one can prove that, if $R$ is a maximal subring of a ring $T$, $P=(R:T)_l$, $a,b\in T$ such that $aTb\subseteq P$ and $a,b\notin P$, then the following hold:
\begin{enumerate}
\item $a$ is right $(2-)$integral over $R$ and $b$ is left $(2-)$integral over $R$.
\item If $a\notin R$, then $T=R+RaR$.
\item If $b\notin R$, then $T=R+RbR$.
\end{enumerate}
In particular, if $a\notin R$, then $b\notin R$; and if $b\notin R$, then either $a\notin R$ or $a\in (R:T)_l$.
\end{rem}

Let $T$ be a ring and $P$ be a proper one-sided ideal of $T$. $P$ is called prime if for each $a,b\in T$, from $aTb\subseteq P$ we deduce that either $a\in P$ or $b\in P$. For example, each maximal one-sided ideal $M$ of a ring $T$ is prime. To see this, assume that $M$ be a maximal left ideal of $T$ and $a,b\in T$ such that $aTb\subseteq M$. If $b\in M$, then we are done. Hence assume that $b\notin M$, therefore $M+Tb=T$, by maximality of $M$. Hence $aT=aM+aTb\subseteq M$ and therefore $M$ is prime. Now the following is in order.

\begin{cor}\label{t31}
Let $R$ be a maximal subring of a ring $T$ which is $(2-)$integrally closed in $T$, then $(R:T)_l$ and $(R:T)_r$ are prime one-sided ideals of $T$.
\end{cor}

For the ideal $(R:T)$ of an integrally closed maximal subring $R$ of a ring $T$, we have the following remark.

\begin{rem}\label{t32}
Let $R$ be a maximal subring of a ring $T$ which is $(2-)$integrally closed in $T$. If $a,b\in T$ such that $aTb\subseteq (R:T)$, but $a,b\notin (R:T)$, then the following hold:
\begin{enumerate}
\item $a,b\in R$.
\item $RaT\subseteq R$, but $TaR\nsubseteq R$. In particular, $T=R+TaR$.
\item $TbR\subseteq R$, but $RbT\nsubseteq R$. In particular, $T=R+RbT$.
\item $a\in (R:T)_r$, $b\in (R:T)_l$, $a\notin (R:T)_l$ and $b\notin (R:T)_r$.
\end{enumerate}
To see these, for $(1)$ we show that $a\in R$, the proof for $b\in R$ is similar. Since $a,b\notin (R:T)$, we infer that $TaT$ and $TbT$ are not contained in $R$. Therefore by maximality of $R$, we deduce that $R+TaT=T$ and $R+TbT=T$. Hence $a\in R+TbT$. Therefore there exist $n\in\mathbb{N}$, $x_i,y_i\in T$ for $1\leq i\leq n$ and $r\in R$ such that
$a=r+x_1by_1+\cdots+x_nby_n$. Multiplying by $a$ from left we obtain that $a^2=ar+ax_1by_1+\cdots+ax_nby_n$. Now note that for each $i$, $ax_ib\in aTb\subseteq (R:T)$ and $(R:T)$ is an ideal of $T$ which is contained in $R$, therefore $ax_iby_i\in (R:T)\subseteq R$ for each $i$. Hence $s:=ax_1by_1+\cdots+ax_nby_n\in R$ and $a^2=ar+s$ which means $a$ is right $(2-)$integral over $R$ and therefore $a\in R$. Thus $(1)$ holds. $(4)$ is obvious from $(2)$ and $(3)$. We prove $TaR\nsubseteq R$ and $TbR\subseteq R$, the proofs of the other parts of $(3)$ and $(4)$ are similar. Since $b\notin (R:T)$ we conclude that $TbT\nsubseteq R$ and therefore by maximality of $R$, we deduce that $R+TbT=T$. Hence $aR+aTbT=aT$ and therefore $TaR+TaTbT=TaT$. From $a\notin (R:T)$, we have $TaT\nsubseteq R$ and form $aTb\subseteq (R:T)$ we have $TaTbT\subseteq R$. Thus by $TaR+TaTbT=TaT$, we conclude that $TaR\nsubseteq R$ and therefore $Ta\nsubseteq R$. Hence $a\notin (R:T)_l$ and since $aTb\subseteq (R:T)\subseteq (R:T)_l$, we infer that $b\in (R:T)_l$, by the previous corollary. Hence $Tb\subseteq R$ and therefore $TbR\subseteq R$.
\end{rem}

Finally in this section we want to prove that whenever $R$ is an integrally closed maximal subring of a ring $T$, then $(R:T)$ is a semiprime ideal of $T$. First we need the next lemma.

\begin{lem}\label{t33}
Let $R$ be a maximal subring of a ring $T$ and $x\in (R:T)_l\cup (R:T)_r$. If $xTx\subseteq (R:T)$, then $x\in (R:T)$.
\end{lem}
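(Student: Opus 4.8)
The plan is to reduce everything to showing $TxT\subseteq R$, which is precisely the assertion $x\in(R:T)$, and to play the maximality of $R$ against the two-sided absorption property of the ideal $(R:T)$. By the left/right symmetry of the two conductors I would treat the case $x\in(R:T)_l$, so that $Tx\subseteq R$; the case $x\in(R:T)_r$, giving $xT\subseteq R$, is entirely analogous with the roles of the two outer copies of $T$ interchanged.

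First I would note that $TxT$ is a two-sided ideal of $T$, so $R+TxT$ is a subring of $T$ containing $R$. If $TxT\subseteq R$ there is nothing to prove, so I would suppose otherwise; then $R+TxT$ strictly contains $R$, and maximality of $R$ forces $R+TxT=T$. Substituting this expression for $T$ into the \emph{rightmost} factor of $TxT$ yields
\[
TxT = Tx(R+TxT) = TxR + TxTxT .
\]
The first summand lies in $R$ because $Tx\subseteq R$ and $R$ is closed under multiplication, so $TxR\subseteq RR\subseteq R$. For the second summand I would regroup as $T(xTx)T$ and invoke the hypothesis $xTx\subseteq(R:T)$ together with the fact that $(R:T)$ is a two-sided ideal of $T$, obtaining $T(xTx)T\subseteq T(R:T)T\subseteq(R:T)\subseteq R$. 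Hence $TxT\subseteq R$, which contradicts $R\neq T$; therefore $x\in(R:T)$.

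The computation is short, so the step I would be most careful about is not a hard estimate but a bookkeeping choice: in the case $x\in(R:T)_l$ one must expand the last copy of $T$ (so that the leftover $Tx$ is absorbed into $R$), whereas in the case $x\in(R:T)_r$ one expands the first copy of $T$ instead, so that the leftover $xT$ is absorbed. In both cases the middle term collapses through the absorption $T(R:T)T\subseteq(R:T)$, and it is exactly at this point that the full strength of the hypothesis $xTx\subseteq(R:T)$, rather than merely $xTx\subseteq R$, is needed, since only a genuine two-sided ideal of $T$ is stable under multiplication by $T$ on both sides.
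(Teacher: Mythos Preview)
Your proof is correct and is essentially the same argument as the paper's: assume $x\in(R:T)_l$, suppose $TxT\nsubseteq R$, use maximality to get $R+TxT=T$, and then expand $TxT=TxR+TxTxT\subseteq R$ to reach a contradiction. The only cosmetic difference is in how the contradiction is phrased; the paper directly contradicts the assumption $TxT\nsubseteq R$, while you route it through $T=R+TxT\subseteq R$, but the content is identical.
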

\begin{proof}
Assume that $x\in (R:T)_l$, thus $Tx\subseteq R$ and therefore $TxR\subseteq R$. Suppose $x\notin (R:T)$, thus $TxT\nsubseteq R$ and hence $R+TxT=T$, by maximality of $R$. Since $xTx\subseteq (R:T)$, we infer that $TxTxT\subseteq R$. Therefore $TxT=TxR+TxTxT\subseteq R$, which is absurd. Thus $x\in (R:T)$. The proof for the case $x\in (R:T)_r$ is similar.
\end{proof}

Now the following is in order.

\begin{thm}\label{t34}
Let $R$ be a maximal subring of a ring $T$ which is (2-)integrally closed in $T$ (or whenever $x\in T$ and $x^2\in R$, then $x\in R$). Then $(R:T)$ is a semiprime ideal of $T$. Moreover, either $(R:T)$ is a prime ideal of $T$ or $(R:T)=(R:T)_l\cap (R:T)_r$ (is a semiprime ideal of $R$).
\end{thm}
\begin{proof}
We must prove that when $x\in T$ and $xTx\subseteq (R:T)$, then $x\in (R:T)$. Since $xTx\subseteq R$, we infer that $x^2\in R$ and therefore $x\in R$, by our assumption. Hence $xRx\subseteq xTx\subseteq (R:T)\subseteq (R:T)_l\cap (R:T)_r$. Therefore $xRx\subseteq (R:T)_l$ and hence $x\in (R:T)_l$, for $(R:T)_l$ is a prime ideal of $R$, by Lemma \ref{t1}. Thus $x\in (R:T)$, by the previous lemma. Hence $(R:T)$ is a semiprime ideal of $T$. Therefore there exists a family $Q_i$, $i\in I$, of prime ideals of $T$, such that $(R:T)=\bigcap_{i\in I} Q_i$. Now we have two cases. If there exists $i\in I$, such that $Q_i\subseteq R$, then $Q_i\subseteq (R:T)$, and therefore $(R:T)=Q_i$ is a prime ideal of $T$. Hence assume that for each $i\in I$, $Q_i\nsubseteq R$. Thus for each $i\in I$, we conclude that $R+Q_i=T$, by maximality of $R$. This immediately implies that $R/(Q_i\cap R)\cong T/Q_i$, as rings, which means $Q_i\cap R$ is a prime ideal of $R$. Now it is clear that $(R:T)=\bigcap_{i\in I} (Q_i\cap R)$, and therefore $(R:T)$ is a semiprime ideal of $R$. The final part is evident by $(4)$ of Proposition \ref{t2}.
\end{proof}

\section{Finiteness conditions}
As mentioned in the introduction, if $R$ is a finite maximal subring of a ring $T$, then $T$ is finite too, see \cite{blgfc,blknfms,klein,laffey,lee}. If $T$ is a commutative ring, and $R$ is a maximal subring of $T$, then $R$ is an Artinian ring if and only if $T$ is an Artinian ring and $T$ is integral over $R$, see \cite[Theorem 3.8]{azkrc}. Moreover, if $R$ is a Noetherian maximal subring of a commutative ring $T$, then $T$ is Noetherian, by Hilbert Basis Theorem. Conversely, if $R$ is a maximal subring of a commutative Noetherian ring $T$ and $T$ is integral over $R$, then $R$ is a Noetherian ring by Eakin-Nagata-Eisenbud Theorem, see \cite[Theorem 3.98]{lam2}. In this section we obtain some observation for non-commutative rings by assuming some finiteness condition on maximal subrings. First we begin by the following result.

\begin{prop}\label{t20}
Let $R$ be a maximal subring of a ring $T$ and $P=(R:T)_l$. If $PT=T$, then $R=\mathbb{I}_T(P)$, $T$ is a finitely generated left $R$-module, $P$ is a finitely generated right $R$-module which is a right primitive ideal of $R$ and $T$ is not a left quasi duo ring. Moreover, if in addition $R$ is a left Artinian/Noetherian ring, then so is $T$.
\end{prop}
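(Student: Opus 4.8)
The plan is to read everything off the single relation $PT=T$. Since $1\in T=PT$, fix an expression $1=\sum_{i=1}^{n}p_it_i$ with $p_i\in P$ and $t_i\in T$; this finite datum drives all the finiteness assertions. I first dispose of $R=\mathbb{I}_T(P)$: as $P$ is a two-sided ideal of $R$ we have $R\subseteq\mathbb{I}_T(P)$, while $\mathbb{I}_T(P)=T$ would give $PT\subseteq P\subsetneq T$, contradicting $PT=T$; maximality of $R$ forces $\mathbb{I}_T(P)=R$. For $_RT$ finitely generated the point is that $P$ is a \emph{left} ideal of $T$, so $tp_i\in TP\subseteq P\subseteq R$ for every $t\in T$; hence $t=t\cdot 1=\sum_{i=1}^{n}(tp_i)t_i\in\sum_{i=1}^{n}Rt_i$, and therefore $T=\sum_{i=1}^{n}Rt_i$ is a finitely generated left $R$-module.

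The mirror image of this computation handles $P$ as a right $R$-module: for $p\in P$ one has $t_ip\in TP\subseteq P\subseteq R$, so $p=1\cdot p=\sum_{i=1}^{n}p_i(t_ip)\in\sum_{i=1}^{n}p_iR$, giving $P=\sum_{i=1}^{n}p_iR$, a finitely generated right $R$-module. That $T$ is not left quasi duo I would prove by contradiction: $P\subseteq R\subsetneq T$ is a proper left ideal, hence $P\subseteq M$ for some maximal left ideal $M$ of $T$; were $T$ left quasi duo, $M$ would be two-sided, whence $PT\subseteq MT\subseteq M\subsetneq T$, contradicting $PT=T$.

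For the primitivity of $P=(R:T)_l$ as an ideal of $R$ the natural tool is Corollary \ref{t5}: it suffices to produce a maximal \emph{right} $R$-submodule $M$ with $R\subseteq M\subsetneq T$, for then Lemma \ref{t4} yields $r.ann_R(T/M)=P$ with $T/M$ simple, so $P$ is right primitive. This is the step I expect to be the main obstacle. The finiteness we have is of $_RT$, i.e.\ on the left, whereas Corollary \ref{t5} concerns the right module $(T/R)_R$; and because $PT=T$ is genuinely one-sided (indeed $TP\subseteq P\neq T$), $T_R$ need not be finitely generated, so one cannot simply invoke the finitely generated case of Corollary \ref{t5}. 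My preferred line of attack is through the left-ideal analogue of Remark \ref{t14}, namely the idealizer isomorphism $R/P\cong End({}_T(T/P))^{\mathrm{op}}$; combined with $P(T/P)=T/P$ (which is exactly $PT=T$) and with the fact that $T/P$ is a faithful right $R/P$-module, this should let one exhibit a simple right $R/P$-module with annihilator $P$, equivalently a simple quotient of $(T/R)_R$, thereby completing the primitivity claim.

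Finally, the Noetherian and Artinian transfer is immediate from $_RT$ being finitely generated. If $R$ is left Noetherian (resp.\ left Artinian), then the finitely generated left $R$-module $T$ is a Noetherian (resp.\ Artinian) module, hence satisfies the ascending (resp.\ descending) chain condition on its left $R$-submodules. Every left ideal of $T$ is in particular a left $R$-submodule of $T$, so $T$ inherits the ascending (resp.\ descending) chain condition on left ideals and is therefore left Noetherian (resp.\ left Artinian).
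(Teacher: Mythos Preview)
Your arguments for $R=\mathbb{I}_T(P)$, for the finite generation of ${}_RT$ and of $P_R$, for $T$ not being left quasi duo, and for the Noetherian/Artinian transfer coincide with the paper's proof essentially line by line, including the device of writing $1=\sum p_it_i$ and running the two mirror computations.

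On the right-primitivity of $P$ you have in fact put your finger on a point the paper glosses over: the paper simply writes that since ${}_RT$ is finitely generated, $P$ is a right primitive ideal of $R$ by Corollary~\ref{t5}. But, exactly as you observe, Corollary~\ref{t5} is stated for $(T/R)_R$ (equivalently $T_R$) finitely generated, whereas the computation from $PT=T$ only yields finite generation of ${}_RT$; the left--right dual of Corollary~\ref{t5} would instead show that $(R:T)_r$ is \emph{left} primitive, which is not the assertion. Your proposed workaround via the idealizer isomorphism $R/P\cong End({}_T(T/P))^{\mathrm{op}}$ is pointing at the right structure, but you stop at ``should let one exhibit a simple right $R/P$-module'' without actually producing one, so you have not closed the gap either. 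In short, on this single step your proposal and the paper's own proof share the same unresolved issue; your diagnosis that this is the genuine obstacle is correct.
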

\begin{proof}
Since $P$ is a left ideal of $T$ which is a two sided ideal of $R$ and $P$ is not an ideal of $T$, we conclude that $R\subseteq \mathbb{I}_T(P)\subsetneq T$ and therefore by maximality of $R$ we deduce that $R=\mathbb{I}_T(P)$. From $PT=T$, we obtain that if $M$ is a maximal left ideal of $T$ which contains $P$, then $M$ is not an ideal of $T$ and therefore $T$ is not a left quasi duo ring. Since $T=PT$, we conclude that $1=y_1t_1+\cdots+y_nt_n$ for some $y_i\in P$, $t_i\in T$ and $n\in\mathbb{N}$. Thus for each $x\in T$ we have $x=x1=(xy_1)t_1+\cdots+(xy_n)t_n$. Now note that $y_i\in P$ which is a left ideal of $T$, therefore $xy_i\in P\subseteq R$ which immediately shows that $T=Rt_1+\cdots+Rt_n$. Also note that for each $p\in P$, we have $p=1p=y_1t_1p+\cdots+y_nt_np$. Now since $t_ip\in Tp\subseteq P\subseteq R$, we deduce that $P=y_1R+\cdots+y_nR$ and therefore $P$ is a finitely generated right $R$-module. Since ${}_RT$ is finitely generated, we deduce that $P$ is a right primitive ideal of $R$, by Corollary \ref{t5}. The final part is evident for ${}_RT$ is finitely generated.
\end{proof}

It is clear that the previous proposition has a similar statement for $(R:T)_r$. Therefore we have the following result.

\begin{cor}\label{t21}
Let $R$ be a maximal subring of a ring $T$ and $(R:T)\in Max(T)$. If $(R:T)\subsetneq (R:T)_l, (R:T)_r$ (i.e., $(R:T)_l$ and $(R:T)_r$ are not ideals of $T$), then the following hold:
\begin{enumerate}
\item ${}_R T$ and $T_R$ are finitely generated.
\item $(R:T)_l$ is a finitely generated right $R$-module which is a right primitive ideal of $R$. In particular, $l.ann_T((R:T)_l)=l.ann_T(p_1)\cap\cdots\cap l.ann_T(p_m)$ for some $p_1,\ldots p_m\in (R:T)_l$.
\item $(R:T)_r$ is a finitely generated left $R$-module which is a left primitive ideal of $R$. In particular, $r.ann_T((R:T)_r)=r.ann_T(q_1)\cap\cdots\cap r.ann_T(q_n)$ for some $q_1,\ldots q_n\in (R:T)_r$.
\end{enumerate}
In particular, if $(R:T)\in Max(T)\setminus Spec(R)$, then $(1)-(3)$ hold.
\end{cor}
\begin{proof}
Since $(R:T)\subsetneq (R:T)_l, (R:T)_r$, we conclude that $(R:T)_lT=T$ and $T(R:T)_r=T$, for $(R:T)$ is a maximal ideal of $T$. Hence we are done by the previous proposition and its proof. For the final part note that if $(R:T)$ is not a prime ideal of $R$, then by Lemma \ref{t1}, $(R:T)$ is not equal to either $(R:T)_l$ or $(R:T)_r$.
\end{proof}

In particular, if we add the assumption that $R$ is a left/right Noetherian (resp. Artinian) ring in the previous corollary, we have our next corollary.

\begin{cor}\label{t22}
Let $R$ be a maximal subring of a ring $T$ and $(R:T)\in Max(T)$. Then the following hold:
\begin{enumerate}
\item If $R$ is a left Noetherian (resp. Artinian) ring, then either $T$ is a left Noetherian (resp. Artinian) ring or $(R:T)=(R:T)_l$. Therefore $(R:T)_l\subseteq (R:T)_r$ (resp. $(R:T)=(R:T)_l=(R:T)_r$).
\item If $R$ is a right Noetherian (resp. Artinian) ring, then either $T$ is a right Noetherian (resp. Artinian) ring or $(R:T)=(R:T)_r$. Therefore $(R:T)_r\subseteq (R:T)_l$ (resp. $(R:T)=(R:T)_l=(R:T)_r$).
\end{enumerate}
\end{cor}
\begin{proof}
For $(1)$, note that either $(R:T)_l=(R:T)\subseteq (R:T)_r$ or $(R:T)\subsetneq (R:T)_l$ and therefore by Proposition \ref{t20},  ${}_R T$ is finitely generated and hence $T$ is a left Noetherian (resp. Artinian) $R$-module. Hence $T$ is a left Noetherian (resp. Artinian) ring. Also note that if $R$ is a left Artinian ring, then by Lemma \ref{t1}, $(R:T)_l$ and $(R:T)_r$ are maximal ideals of $R$, thus in the case $(R:T)_l=(R:T)\subseteq (R:T)_r$, we conclude that $(R:T)=(R:T)_l=(R:T)_r$.  The proof of $(2)$ is similar.
\end{proof}

Consequently, if $R$ is an integrally closed maximal subring of a zero-dimensional ring $T$ and $(R:T)$ is not a semiprime ideal of $R$, then by Theorem \ref{t34} and the previous corollary we deduce that $T$ is a left/right Noetherian (resp. Artinian) ring, whenever $R$ is a left/right Noetherian (resp. Artinian) ring. For the next result, note that since in a simple ring $T$, we automatically have the assumption $(R:T)=0$ for each maximal subring $R$ of $T$, then by the previous result we conclude the following.

\begin{cor}\label{t23}
Let $R$ be a maximal subring of a simple ring $T$. Then the following hold:
\begin{enumerate}
\item If $R$ is a left Noetherian (resp. Artinian) ring, then either $T$ is a left Noetherian (resp. Artinian) ring or $(R:T)_l=0$ (resp. $(R:T)_l=(R:T)_r=0$, $R=\mathbb{M}_n(D)$, where $D$ is a division ring and $T=\mathbb{M}_n(S)$, for a simple ring $S$ and $D$ is a maximal subring of $S$).
\item If $R$ is a right Noetherian (resp. Artinian) ring, then either $T$ is a right Noetherian (resp. Artinian) ring or $(R:T)_r=0$ (resp. $(R:T)_l=(R:T)_r=0$, $R=\mathbb{M}_n(D)$, where $D$ is a division ring and $T=\mathbb{M}_n(S)$, for a simple ring $S$ and $D$ is a maximal subring of $S$).
\end{enumerate}
\end{cor}
\begin{proof}
For $(1)$, the Noetherian part is clear by the previous result. Hence assume that $R$ is a left Artinian ring, if $T$ is not a left Artinian ring, then by the previous corollary $(R:T)_l=0$. Since $(R:T)_l$ is a prime ideal of $R$, we conclude that $R=\mathbb{M}_n(D)$, for some division ring $D$ and $n\geq 1$ (hence $R$ is a right Artinian too). Thus $T=\mathbb{M}_n(S)$, for a simple ring $S$, for $R$ is a subring of $T$ and $T$ is a simple ring. Since $R$ is a maximal subring of $T$, we immediately conclude that $D$ is a maximal subring of $S$. The proof of $(2)$ is similar.
\end{proof}

Similar to Proposition \ref{t19}, there exists a relation between the centralizer of a left Noetherian maximal subring $R$ of a ring $T$, the condition that $T$ is a left Noetherian ring and the conductor ideals, as follows.

\begin{prop}\label{t24}
Let $R$ be a left Noetherian maximal subring of a ring $T$. Then either $C_T(R)\subseteq R$ or $T$ is a left Noetherian ring and $(R:T)=(R:T)_l=(R:T)_r$.
\end{prop}
\begin{proof}
Assume that $C_T(R)$ is not contained in $R$ and $\alpha\in C_T(R)\setminus R$. Since $\alpha$ commutes with any element of $R$, we immediately conclude that $R[\alpha]$ is a left Noetherian ring, by Hilbert Basis Theorem. Clearly $T=R[\alpha]$, for $R$ is a maximal subring of $T$ and $\alpha\in T\setminus R$. Hence $T$ is a left Noetherian ring. Finally, if $(R:T)_l$ is not an ideal of $T$, then by Remark \ref{t13}, we deduce that $R=\mathbb{I}_T((R:T)_l)$ and therefore $R$ contains $C_T(R)$ which is absurd. Thus $(R:T)_l$ is an ideal of $T$ and similarly $(R:T)_r$ is an ideal of $T$. Hence $(R:T)=(R:T)_l=(R:T)_r$ and we are done.
\end{proof}

Now we have the following main result in this section.

\begin{thm}\label{t35}
Let $R$ be a Noetherian (resp. Artinian) maximal subring of a ring $T$ and $(R:T)\neq 0$. Then the following hold:
\begin{enumerate}
 \item $T/l.ann_T((R:T)_l)$, in particular, $T/l.ann_T((R:T))$, are left Noetherian (resp. Artinian) rings.
 \item $T/r.ann_T((R:T)_r)$, in particular $T/r.ann_T((R:T))$, are right Noetherian (resp. Artinian) rings.
 \item If $T$ is semiprime, then $T/ann_T((R:T))$ is a Noetherian (resp. an Artinian) ring.
 \item If $T$ is a prime ring, then $T$ is finitely generated as left and right $R$-modules. In particular, $T$ is a Noetherian (resp. an Artinian) ring. Moreover, $(R:T)_l$ and $(R:T)_r$ are  right and left primitive ideals of $R$, respectively.
\end{enumerate}
\end{thm}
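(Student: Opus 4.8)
The plan is to dispose of (1) and (2) by a single module-theoretic construction, deduce (3) from them using semiprimeness, and then bootstrap (4) from the prime hypothesis.

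For (1), set $P=(R:T)_l$; this is a left ideal of $T$ and a two-sided ideal of $R$, and $l.ann_T(P)$ is a two-sided ideal of $T$, so $\bar T:=T/l.ann_T(P)$ is a ring acting faithfully from the left on $P$. Since $R$ is right Noetherian and $P$ is a right ideal of $R$, we may write $P=p_1R+\cdots+p_mR$, and the assignment $\bar t\mapsto(tp_1,\dots,tp_m)$ defines a left $\bar T$-linear embedding $\bar T\hookrightarrow P^m$ (injectivity: $tp_i=0$ for all $i$ gives $tP=0$). The key point is that a left $\bar T$-submodule of $P$ is just a left $T$-submodule of $P$, and since $R\subseteq T$ it is in particular a left $R$-submodule of $P$. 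As $P$ is a left ideal of the left Noetherian ring $R$, it is a Noetherian left $R$-module, so its left $R$-submodules — a fortiori its left $T$-submodules — satisfy the ascending chain condition; thus $P$, and hence $P^m$, is a Noetherian left $\bar T$-module, and its submodule $\bar T$ is therefore left Noetherian. Reading ``Noetherian/ACC'' as ``Artinian/DCC'' throughout (right Artinian still yields a finite $m$, left Artinian makes $P$ an Artinian left $R$-module) gives the Artinian case. The ``in particular'' statement follows since $(R:T)\subseteq(R:T)_l$ gives $l.ann_T((R:T)_l)\subseteq l.ann_T((R:T))$, making $T/l.ann_T((R:T))$ a quotient ring of $\bar T$.

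Part (2) is the mirror image: with $P'=(R:T)_r$, finitely generated as a left $R$-module, the map $\bar t\mapsto(p'_1t,\dots,p'_{m'}t)$ embeds $T/r.ann_T(P')$ into $(P')^{m'}$, and now right $T$-submodules of $P'$ are controlled by right $R$-submodules via right Noetherianness (resp. Artinianness) of $R$. For (3) I would invoke the standard fact that in a semiprime ring the left and right annihilators of a two-sided ideal agree with the two-sided annihilator; applied to $I=(R:T)$ this gives $ann_T(I)=l.ann_T(I)=r.ann_T(I)$, so the single ring $T/ann_T(I)$ is simultaneously the left Noetherian (resp. Artinian) quotient from (1) and the right one from (2), hence Noetherian (resp. Artinian).

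Finally, for (4) assume $T$ is prime. As $(R:T)\neq0$, primeness forces $l.ann_T((R:T))=r.ann_T((R:T))=0$, whence $l.ann_T((R:T)_l)=r.ann_T((R:T)_r)=0$ too. The embeddings of (1) and (2) then become injections $T\hookrightarrow P^m$ and $T\hookrightarrow(P')^{m'}$, which I read as embeddings of left and of right $R$-modules respectively. Since $P$ is finitely generated as a left $R$-module and $P'$ as a right $R$-module over the (two-sided) Noetherian ring $R$, both targets are Noetherian $R$-modules, so $T$ is finitely generated both as a left and as a right $R$-module; with $R$ Noetherian (resp. Artinian) this makes $T$ a Noetherian (resp. Artinian) ring. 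As $T_R$ and ${}_RT$ are finitely generated, Corollary \ref{t5} and its left-hand analogue show that $(R:T)_l$ is a right primitive and $(R:T)_r$ a left primitive ideal of $R$. The step I expect to be delicate is the one in (1): it uses the two one-sided finiteness hypotheses on $R$ for genuinely different purposes — right Noetherianness to make the embedding dimension $m$ finite, and left Noetherianness (through $R\subseteq T$) to force the chain condition on the left $T$-submodules of $P$ — so one must keep the sides straight and verify that $\bar T\hookrightarrow P^m$ is left $\bar T$-linear, so that the chain condition genuinely transfers from the module $P^m$ to the ring $\bar T$.
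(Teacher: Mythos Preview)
Your proof is correct and follows essentially the same route as the paper's: both use right Noetherianness of $R$ to write $(R:T)_l=p_1R+\cdots+p_mR$, embed $T/l.ann_T((R:T)_l)$ into a finite direct power via $\bar t\mapsto(tp_1,\dots,tp_m)$, and then use left Noetherianness (resp.\ Artinianness) of $R$ together with $R\subseteq T$ to pass the chain condition down. The only cosmetic difference is that the paper states the target as $Tx_1\times\cdots\times Tx_n\subseteq R^n$ rather than $P^m$, and in (4) it phrases the conclusion as ``$T$ embeds in $R^n$ as a left $R$-module'' directly; parts (3) and (4) are argued identically.
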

\begin{proof}
$(1)$ Since $R$ is a right Noetherian ring, there exist $x_1,\ldots, x_n\in (R:T)_l$ such that $(R:T)_l=x_1R+\cdots+x_nR$. Thus $l.ann_T((R:T)_l)=l.ann_T(x_1)\cap\cdots\cap l.ann_T(x_n)$. Therefore $T/l.ann_T((R:T)_l)$ embeds in $Tx_1\times\cdots\times Tx_n$ as a left $T$-module and therefore as a left $R$-module too. Since $Tx_i\subseteq R$, we conclude that $T/l.ann_T((R:T)_l)$ embeds in $R^n$ as a left $R$-module and therefore $T/l.ann_T((R:T)_l)$ is left Noetherian (resp. Artinian) for $R$ is left Noetherian (resp. Artinian). Thus the first part of $(1)$ holds. Also note that $(R:T)\subseteq (R:T)_l$ and therefore $l.ann_T((R:T)_l)\subseteq l.ann_T((R:T))$. Hence $T/l.ann_T((R:T))$) is left Noetherian (resp. Artinian) too.  Similarly $(2)$ holds. For $(3)$, first note that since $T$ is a semiprime ring, we immediately conclude that the left and right annihilators of $(R:T)$ are coincided. Thus by $(1)$ and $(2)$, we deduce that $T/ann_T((R:T))$ is Noetherian (resp. Artinian). Finally, for $(4)$, since $0\neq (R:T)\subseteq (R:T)_l$, we infer that $l.ann_T((R:T)_l)=0$, thus by the proof of $(1)$, $T$ is isomorphic to a left submodule of $R^n$, hence $T$ is a finitely generated left $R$-module. Similarly, $T$ is a finitely generated right $R$-module. This immediately shows that $T$ is Noetherian (resp. Artinian). The final part is evident by Corollary \ref{t5}.
\end{proof}

Note that in the previous theorem, the assumption $(R:T)\neq 0$ is not necessary for items $(1)-(3)$, but since the conclusions in the case $(R:T)=0$ is trivial for these items, we assumed the condition $(R:T)\neq 0$ in the statement of the theorem.\\

In \cite{klein} and \cite{laffey}, the authors proved that if a finite ring $R$ is a maximal subring of a ring $T$, then $T$ is finite too. In the following remark we prove this result by the previous theorem in special case.

\begin{rem}\label{t39}
Let $T$ be a prime ring with a finite maximal subring $R$. If $(R:T)_l\neq 0$, then $T$ is finite. To see this, note that by the proof of $(4)$ in the previous theorem, $T$ embeds in $R^n$ and hence $T$ is finite.
\end{rem}

Also we have the following observations.

\begin{prop}\label{t39a}
Let $R$ be a left Noetherian (resp. Artinian) ring which is a maximal subring of a ring $T$. Assume that $(R:T)_l$ is a finitely generated as right ideal of $R$ and $(R:T)$ contains a prime ideal $Q$ of $T$. Then either $(R:T)=(R:T)_l\subseteq (R:T)_r$ (resp. $(R:T)=(R:T)_l=(R:T)_r$) or $T$ is a left Noetherian (resp. Artinian) ring.
\end{prop}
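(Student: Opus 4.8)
The plan is to introduce the two two-sided ideals $K:=l.ann_T(P)$ and $I:=PT$ of $T$, where $P:=(R:T)_l$, and to play them off against the prime ideal $Q$. First I would record the basic structure: $P$ is a left ideal of $T$ contained in $R$, so $TP\subseteq P$. This makes $I=PT$ a two-sided ideal (indeed $TI=TPT=PT=I$), and it makes $K$ a two-sided ideal as well (if $xP=0$ then $(xt)P=x(tP)\subseteq xP=0$). The identity that drives everything is $KI=(KP)T=0$, which holds since $KP=0$ by the very definition of $K$.

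Next I would reproduce the module estimate from the proof of Theorem \ref{t35}(1), now using the hypothesis directly rather than deriving it. Writing $P=x_1R+\cdots+x_nR$ (possible since $P$ is a finitely generated right $R$-module) and using $Tx_i\subseteq R$, the assignment $t\mapsto(tx_1,\dots,tx_n)$ is a left $R$-module homomorphism $T\to R^n$ whose kernel is exactly $\bigcap_i l.ann_T(x_i)=K$. Hence $T/K$ embeds into $R^n$ as a left $R$-module; since $R$ is left Noetherian (resp. Artinian), $R^n$ and therefore $T/K$ is a Noetherian (resp. Artinian) left $R$-module, and consequently $T/K$ is a left Noetherian (resp. Artinian) ring. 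This step needs neither a case distinction nor maximality of $R$.

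The dichotomy of the statement then comes from $Q$. Since $KI=0\subseteq Q$ and $Q$ is a prime ideal of $T$, either $I\subseteq Q$ or $K\subseteq Q$. If $I\subseteq Q$, then the chain $(R:T)_l=P\subseteq PT=I\subseteq Q\subseteq(R:T)\subseteq(R:T)_l$ collapses, forcing $(R:T)=(R:T)_l$ (and $(R:T)\subseteq(R:T)_r$ holds always); in the Artinian case $(R:T)_l$ and $(R:T)_r$ are prime by Theorem \ref{t1}, hence maximal (a prime left Artinian ring is simple Artinian), so $(R:T)_l\subseteq(R:T)_r$ upgrades to $(R:T)=(R:T)_l=(R:T)_r$. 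This is the first alternative. If instead $K\subseteq Q$, then $K\subseteq Q\subseteq(R:T)\subseteq R$, so $K$ is a two-sided ideal of $T$ lying inside $R$; every left $T$-submodule of $K$ is then a left $R$-submodule of $R$, whence $K$ is a Noetherian (resp. Artinian) left $T$-module. Feeding this together with the conclusion of the previous paragraph into the short exact sequence $0\to K\to T\to T/K\to0$ of left $T$-modules shows that $T$ is a left Noetherian (resp. Artinian) ring, which is the second alternative.

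The only genuinely inventive step is the choice of the pair $(K,I)$ with $KI=0$: this is precisely what converts the primeness of $Q$ into the stated dichotomy, with the branch $I\subseteq Q$ collapsing $(R:T)_l$ onto $(R:T)$ and the branch $K\subseteq Q$ trapping $K$ inside $R$ so that the estimate on $T/K$ propagates to $T$. I expect the main care to be in verifying that $K$ and $I$ are genuinely two-sided and that $KI=0$; once that is in place, the remaining bimodule bookkeeping and the extension-closedness of the Noetherian/Artinian properties are routine.
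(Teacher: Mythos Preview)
Your proof is correct and follows essentially the same line as the paper's. The paper also sets $K=l.ann_T((R:T)_l)$, embeds $T/K$ into $R^n$ via generators of $(R:T)_l$ as a right $R$-module, and then splits on the prime ideal $Q$ using $K\cdot(R:T)_l=0\subseteq Q$; your explicit introduction of $I=PT$ merely makes the primeness step transparent by replacing the left ideal $(R:T)_l$ with the two-sided ideal it generates, which is the same argument spelled out.
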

\begin{proof}
Similar to the proof of the previous theorem $T/l.ann_T((R:T)_l)$ is a left Noetherian (resp. Artinian) $R$-module. Since $l.ann_T((R:T)_l)(R:T)_l=0\subseteq Q$ and $Q$ is a prime ideal of $T$, we have two cases either $(R:T)_l\subseteq Q$ or $l.ann_T((R:T)_l)\subseteq Q$. If $(R:T)_l\subseteq Q$, then $(R:T)_l\subseteq (R:T)$ and therefore $(R:T)_l=(R:T)\subseteq (R:T)_r$ (resp. $(R:T)_l=(R:T)=(R:T)_r$ for $R$ is a left Artinian ring and $(R:T)_l$ and $(R:T)_r$ are prime ideals of $R$, by Lemma \ref{t1}). Hence assume that $l.ann_T((R:T)_l)\subseteq Q$ and therefore $l.ann_T((R:T)_l)\subseteq (R:T)\subseteq R$. Thus $l.ann_T((R:T)_l)$ is a left Noetherian (resp. Artinian) $R$-module. Therefore $T$ is a left Noetherian (resp. Artinian) $R$-module and hence is a left Noetherian (resp. Artinian) ring.
\end{proof}

\begin{prop}\label{t36}
Let $T$ be a ring which is not prime and $R$ be a Noetherian maximal subring of $T$ with $(R:T)\neq 0$. If $R$ contains a prime ideal $Q$ of $T$, then either $T$ is Noetherian or $Q=(R:T)=(R:T)_l$ or $Q=(R:T)=(R:T)_r$ is a minimal prime ideal of $T$ (thus $Q$ is unique). Moreover if $T$ is neither left Noetherian nor right Noetherian then $Q=(R:T)=(R:T)_l=(R:T)_r$.
\end{prop}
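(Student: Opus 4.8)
The plan is to run the annihilator argument underlying Theorem \ref{t35} once on the left and once on the right, and to read off both the equalities $Q=(R:T)=(R:T)_l$ (resp. $(R:T)_r$) and the minimality of $Q$ from the precise way in which the Noetherian conclusion can fail.

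First I would record the basic containment. Since $Q$ is a prime ideal of $T$ lying in $R$ and $(R:T)$ is the largest ideal of $T$ contained in $R$, we have $Q\subseteq (R:T)\subseteq (R:T)_l\cap (R:T)_r$. Because $R$ is (two-sided) Noetherian and $(R:T)\neq 0$, Theorem \ref{t35} gives that $T/l.ann_T((R:T)_l)$ is Noetherian as a left $R$-module and $T/r.ann_T((R:T)_r)$ is Noetherian as a right $R$-module.

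Next comes the left dichotomy. Write $L=l.ann_T((R:T)_l)$; since $(R:T)_l$ is a left ideal of $T$, the set $L$ is a two-sided ideal of $T$ and $L\cdot (R:T)_l=0\subseteq Q$. For a prime two-sided ideal the implication $AB\subseteq Q\Rightarrow A\subseteq Q\ \text{or}\ B\subseteq Q$ already holds for one-sided ideals $A,B$ (it is the elementwise form $aTb\subseteq Q\Rightarrow a\in Q\ \text{or}\ b\in Q$), so either $(R:T)_l\subseteq Q$ or $L\subseteq Q$. In the first case $(R:T)_l\subseteq Q\subseteq (R:T)\subseteq (R:T)_l$ collapses to $Q=(R:T)=(R:T)_l$. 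In the second, $L\subseteq Q\subseteq R$ exhibits $L$ as a left $R$-submodule of the Noetherian module ${}_RR$, hence Noetherian; combined with Theorem \ref{t35} the short exact sequence $0\to L\to T\to T/L\to 0$ of left $R$-modules makes ${}_RT$ Noetherian, so $T$ is left Noetherian. Thus either $Q=(R:T)=(R:T)_l$ or $T$ is left Noetherian, and the symmetric argument using $r.ann_T((R:T)_r)$ gives either $Q=(R:T)=(R:T)_r$ or $T$ is right Noetherian.

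Finally I would combine the two dichotomies and extract minimality. If $T$ is not left Noetherian, the left dichotomy forces $Q=(R:T)=(R:T)_l$, and moreover $L=l.ann_T(Q)\not\subseteq Q$ (otherwise $T$ would be left Noetherian). This last fact yields minimality: were $P\subsetneq Q$ a prime of $T$, then $l.ann_T(Q)\cdot Q=0\subseteq P$ with $Q\not\subseteq P$ would give $l.ann_T(Q)\subseteq P\subsetneq Q$, contradicting $l.ann_T(Q)\not\subseteq Q$; hence $Q$ is a minimal prime of $T$, and it is unique since every prime of $T$ inside $R$ lies in $(R:T)=Q$ and minimality forces equality. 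The right-hand case is symmetric. Therefore, if $T$ is not Noetherian it fails to be left or right Noetherian, giving $Q=(R:T)=(R:T)_l$ or $Q=(R:T)=(R:T)_r$ as the unique minimal prime; and if $T$ is neither left nor right Noetherian both conclusions apply, so $Q=(R:T)=(R:T)_l=(R:T)_r$. (Note $Q=(R:T)\neq 0$ being a minimal prime forces $0$ to be non-prime, consistent with the hypothesis that $T$ is not prime.) The one delicate point is the passage through one-sided primeness, together with the observation that the precise failure of left-Noetherianity is exactly what delivers $l.ann_T(Q)\not\subseteq Q$, which is the engine of the minimality step.
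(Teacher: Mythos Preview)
Your proof is correct and follows essentially the same strategy as the paper's: use primeness of $Q$ on the product $l.ann_T((R:T)_l)\cdot(R:T)_l=0\subseteq Q$ (and its right analogue) to force either the annihilator into $Q\subseteq R$ --- whence Theorem~\ref{t35} together with $Q\subseteq R$ gives Noetherianity --- or the collapse $(R:T)_l=Q=(R:T)$. The only cosmetic difference is organizational: the paper first separates the cases $Q\neq(R:T)$ and $Q=(R:T)$ and obtains minimality by re-running the first case on a putative smaller prime $Q'\subsetneq Q$, whereas you run the left/right dichotomies directly and read minimality off from the explicit failure condition $l.ann_T(Q)\not\subseteq Q$.
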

\begin{proof}
First note that $Q\subseteq (R:T)$. Now we have two cases: $(a)$ $Q\neq (R:T)$. This immediately implies that $l.ann_T((R:T)), r.ann_T((R:T))\subseteq Q$. Thus by $(1)$ and $(2)$ of the Theorem \ref{t35}, we conclude that $T/Q$ is Noetherian and since $Q\subseteq R$, we deduce that $T$ is Noetherian too. $(b)$ Hence assume that $Q=(R:T)$. Now suppose that $Q'\in Min(T)$ and $Q'\subsetneq Q$, thus $Q'\subsetneq (R:T)$ and therefore by the first case we conclude that $T$ is Noetherian and we are done. Hence $Q=(R:T)$ is a minimal prime ideal of $T$. Now assume that $(R:T)_l$ and $(R:T)_r$ are not contained in $Q=(R:T)$. Then we deduce that $l.ann_T((R:T)_l)$ and $r.ann_T((R:T)_r)$ are contained in $Q$, for $Q$ is a prime ideal. Therefore by $(1)$ and $(2)$ of the Theorem \ref{t35}, $T/Q$ is a left/right Noetherian ring and therefore $T$ is a Noetherian ring, for $Q\subseteq R$. By a similar argument and using $(1)$ and $(2)$ of the Theorem \ref{t35} and the fact that $Q$ is prime, we conclude the final part too.
\end{proof}

\begin{cor}\label{t37}
Let $R$ be an Artinian maximal subring of a prime ring $T$. Then either $R\cong \mathbb{M}_n(D)$ for a division ring $D$ (in particular, $T=\mathbb{M}_n(S)$, where $D$ is a maximal subring of $S$) or $T\cong\mathbb{M}_n(D')$ for a division ring $D'$.
\end{cor}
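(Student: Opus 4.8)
The plan is to split on whether the one-sided conductors vanish, rather than on whether $(R:T)$ itself vanishes, and to lean on two facts. The first is the standard structural fact that a prime ring which is left (or right) Artinian is simple Artinian: in the Artinian case $J(T)$ is nilpotent, primeness forces $J(T)=0$, so $T$ is semisimple, and a prime semisimple ring has a single simple factor, whence $T\cong\mathbb{M}_m(D')$ for a division ring $D'$. The second, which is the engine of the whole argument, is that in a prime ring the left annihilator of a nonzero left ideal is zero: if $0\neq L$ is a left ideal of $T$ and $A=l.ann_T(L)$, then $A$ is a two-sided ideal and $ATL\subseteq AL=0$, so $aT\ell=0$ for every $a\in A$, $\ell\in L$; fixing a single $0\neq\ell\in L$ and using primeness gives $A=0$. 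Note that $(R:T)_l$ is a left ideal of $T$ and a two-sided ideal of $R$ (and symmetrically for $(R:T)_r$), so both facts are applicable.

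First I would treat the case $(R:T)_l\neq 0$. Since $R$ is right Noetherian, the right $R$-ideal $(R:T)_l$ is finitely generated, say $(R:T)_l=x_1R+\cdots+x_kR$; then $l.ann_T((R:T)_l)=\bigcap_i l.ann_T(x_i)$, and $t\mapsto(tx_1,\ldots,tx_k)$ embeds $T/l.ann_T((R:T)_l)$ into $\prod_i Tx_i$ as a left $R$-module. This is precisely the embedding used in the proof of Theorem \ref{t35}(1), and it needs only that $R$ is right Noetherian, not that $(R:T)\neq 0$. Because each $x_i\in(R:T)_l$ gives $Tx_i\subseteq R$, the module $\prod_i Tx_i$ is a left $R$-submodule of $R^k$, hence left Artinian as $R$ is left Artinian. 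By the annihilator fact above $l.ann_T((R:T)_l)=0$, so $T$ is itself an Artinian left $R$-module and therefore a left Artinian ring; being prime, $T\cong\mathbb{M}_m(D')$. The case $(R:T)_r\neq 0$ is symmetric, using that $R$ is left Noetherian and right Artinian to conclude $T$ is right Artinian, hence again $T\cong\mathbb{M}_m(D')$. Thus as soon as one of the one-sided conductors is nonzero, the second alternative of the Corollary holds.

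It remains to handle $(R:T)_l=(R:T)_r=0$. Here $0=(R:T)_l$ is a prime ideal of $R$ by Theorem \ref{t1}, so $R$ is a prime ring; being Artinian it is simple Artinian, i.e. $R\cong\mathbb{M}_n(D)$ for a division ring $D$. Then the matrix units $e_{ij}$ of $R$ lie in $T$ and form a complete set of $n\times n$ matrix units there, so $T\cong\mathbb{M}_n(S)$ with $S=e_{11}Te_{11}$ and $D\cong e_{11}Re_{11}$; the Morita correspondence between intermediate rings of $R\subseteq T$ and of $D\subseteq S$ shows $D$ is a maximal subring of $S$, exactly as in the proof of Corollary \ref{t23}. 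This gives the first alternative and, since the two cases are exhaustive, completes the dichotomy.

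The hard part is getting the case division right, and this is where I expect the main obstacle. The tempting split on whether $(R:T)=0$ fails, because $(R:T)$ can vanish while $T$ is still simple Artinian: in Example \ref{t3} the lower-triangular $R$ inside $T=\mathbb{M}_2(D)$ has $(R:T)=0$ yet $(R:T)_l,(R:T)_r\neq 0$, and there $R$ is not even prime. Splitting instead on the vanishing of the \emph{one-sided} conductors, and replacing the hypothesis $(R:T)\neq 0$ of Theorem \ref{t35}(4) by the weaker $(R:T)_l\neq 0$ together with the prime-ring annihilator lemma, is precisely what repairs the argument; verifying $l.ann_T((R:T)_l)=0$ from primeness alone (rather than from $(R:T)\neq 0$) is the crux.
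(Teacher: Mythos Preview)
Your proof is correct and follows essentially the same route as the paper: the paper splits on whether $R$ is prime (if so, $R\cong\mathbb{M}_n(D)$; if not, then $(R:T)_l\neq 0$ since it is a prime ideal of $R$, and then Theorem~\ref{t35}(1) together with $l.ann_T((R:T)_l)=0$ forces $T$ left Artinian), which is the same dichotomy as yours, just phrased via primeness of $R$ rather than vanishing of the one-sided conductors. Your separate treatment of $(R:T)_r\neq 0$ is redundant---the paper needs only $(R:T)_l$, since ``$R$ not prime'' already guarantees $(R:T)_l\neq 0$---but this does no harm.
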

\begin{proof}
If $R$ is a prime ring, then clearly the first part of the statement of theorem holds. Hence assume that $R$ is not a prime ring. Hence $(R:T)_l\neq 0$ and therefore $l.ann_T((R:T)_l)=0$ for $T$ is prime. Thus by $(1)$ of Theorem \ref{t35}, we conclude that $T$ is a left Artinian ring and hence we are done (note $T$ is prime).
\end{proof}

Finally we conclude this paper by the following result.

\begin{prop}\label{t38}
Let $T$ be a ring which is not prime and $R$ be an Artinian maximal subring of $T$ with $(R:T)\neq 0$. Then either $T$ is Artinian or $dim(T)=0$ or $R$ contains a unique (minimal) prime ideal of $T$, say $Q$, and $T/Q\cong \mathbb{M}_n(S)$ where $S$ has a maximal subring $D$ which is a division ring.
\end{prop}
\begin{proof}
Assume that $T$ is not Artinian. We have two cases: $(a)$ each prime ideal $Q$ of $T$ is not contained in $R$. Thus $R+Q=T$, for $Q\neq 0$ (note $T$ is not prime) and $R$ is a maximal subring of $T$. Thus $R/(R\cap Q)\cong T/Q$, which immediately shows that $Q$ is a maximal ideal of $T$, i.e., $dim(T)=0$. $(b)$ Hence assume that there exists a prime $Q$ of $T$ such that $Q\subseteq R$. Therefore $Q\subseteq (R:T)\subseteq (R:T)_l\cap (R:T)_r$. Now, if $l.ann_T((R:T))$ is not contained in $Q$, we conclude that $(R:T)_l\subseteq Q\subseteq (R:T)\subseteq (R:T)_r$, for $Q$ is a prime ideal of $T$. Since $R$ is an Artinian ring and $(R:T)_l$, $(R:T)_r$ are prime ideals in $R$, we deduce that $(R:T)_l=Q=(R:T)=(R:T)_r$. Similarly if $r.ann_T((R:T)_r)$ is not contained in $Q$, the previous equalities hold. Thus assume that $Q$ contains $l.ann_T((R:T)_l)$ and $r.ann_T((R:T)_r)$. Hence by $(1)$ and $(2)$ or Theorem \ref{t35}, we deduce that $T/Q$ is an Artinian ring. Since $Q\subseteq R$, we conclude that $T$ is Artinian too which is absurd. Hence we have $Q=(R:T)=(R:T)_l=(R:T)_r$ which is a maximal ideal of $R$. Thus $R/Q\cong \mathbb{M}_n(D')$ for a division ring $D'$ and a natural number $n$. Since $R/Q$ is a maximal subring of $T/Q$, we conclude that $T/Q\cong\mathbb{M}_n(S)$, where $S$ has a maximal subring $D\cong D'$.
\end{proof}

\centerline{\Large{\bf Acknowledgement}}
The author is grateful to the Research Council of Shahid Chamran University of Ahvaz (Ahvaz-Iran) for
financial support (Grant Number: SCU.MM1402.721)


\end{document}